\documentclass[12pt]{article}

\usepackage{graphicx,color}  
\usepackage{amsmath,amssymb,amsthm}  
\usepackage{amsfonts,mathtools}
\usepackage{bm}  
\usepackage[english]{babel}
\usepackage[comma,authoryear]{natbib}
\usepackage{todonotes}

\topmargin0pt
\headheight0pt
\headsep0pt
\textheight23cm
\textwidth17.4cm
\oddsidemargin-2pt
\evensidemargin-2pt

\theoremstyle{plain}

\newtheorem{theorem}{Theorem}[section]
\newtheorem{proposition}[theorem]{Proposition}

\newtheorem{corollary}[theorem]{Corollary}

\theoremstyle{definition}
\newtheorem{example}{Example}[section]
\newtheorem{definition}[theorem]{Definition}
\newtheorem{remark}[theorem]{Remark}
\usepackage{titlesec}
\usepackage[titletoc,toc,title]{appendix}

\DeclareMathOperator{\CNormal}{\mathcal {CN}}

\DeclareMathOperator{\Expectation}{\mathbb E}
\DeclareMathOperator{\Normal}{\mathcal N}

\newcommand{\adjointof}[1]{{#1}^*}
\newcommand{\avalof}[1]{\left\vert#1\right\vert}
\newcommand{\cnormalof}[2]{\CNormal_{#1}\left(#2\right)}
\newcommand{\complexu}{\mathbf i}
\newcommand{\complex}{\mathbb C}
\newcommand{\conjugateof}[1]{\overline{#1}}

\newcommand{\euler}{\mathrm{e}}
\newcommand{\expectof}[1]{\Expectation\left(#1\right)}
\newcommand{\expof}[1]{\exp\left(#1\right)}
\newcommand{\normalof}[3]{\Normal_{#1}\left(#2,#3\right)}
\newcommand{\partiald}[2]{\frac{\partial}{\partial #1} #2}

\newcommand{\reals}{\mathbb{R}}
\newcommand{\set}[1]{\left\{#1\right\}}
\newcommand{\smallo}{\operatorname{o}}

\newcommand{\transposedof}[1]{{#1}^t}

\title{Moments of the Complex Multivariate Normal Distribution}
\author{C. Fassino\thanks{Dipartimento di Matematica, Universit\`a di Genova, Genova, Italy }, G. Pistone\thanks{de Castro Statistics, Collegio Carlo Alberto, Piazza Vincenzo Arbarello 8, 10122 Torino, Italy}, M.P. Rogantin$^{*}$}

\begin{document}
\maketitle

\abstract{We present  a characterization of the null moments of the Complex Multivariate Normal Distribution with non-singular covariance matrix and we give closed-forms expressions for its non-null moments.}

\section{Introduction}\label{sec:intro}
Complex Multivariate Normal Distribution (CMND) is defined by \citet{ito:1952} to be the image under the affine tranformation $z \mapsto \mu + Az$ of a standard CMND, where $z, \mu \in \complex^p$, $A \in \complex^{p\times p}$. In turn, a (univariate) Complex Normal Distribution (CND) is a zero-mean normal distribution of $C \simeq \reals^2$ which is invariant under all rotations $z \mapsto \euler^{\complexu\theta}z$ e.g., such that its real and imaginary components are independent and equally distributed. A standard CMND is a vector of independent identically distributed CND. In the following all CMND are assumed to be centered, $\mu=0$. Applications of CMND have been discussed e.g., by \citet{Goodman1963}.

As in the real case, CMND are characterized by a complex covariance matrix  $\Sigma = \expectof{Z\adjointof Z}$, which is self-adjoint, $\adjointof \Sigma = \Sigma$, and we assume to be positive definite, $\adjointof z \Sigma z > 0$ if $z \ne 0$. We denote the CMND with zero mean and variance $\Sigma$ by $\cnormalof p \Sigma$.

The set of all (complex) monomials $\prod_{j=1}^p z_j^{n_j} \conjugateof z_j^{m_j}$ is a separating set of functions on $\complex^p$ which is stable for product and conjugation. The expected values of the complex monomials under $\cnormalof p \Sigma$ are the complex moments whose computation as a functions of the covariance matrix $\Sigma$ is the object of this paper. Finding a low complexity algorithm for the computation of the value of the moments and giving simple conditions under which the moment is zero is non-trivial and of interest.

Various approaches are possible. The first option is the computation of the moments in the standard case, followed by the multi-linear computations induced by the linear transformation of the variables in the monomials. A second option is to consider the characteristic function of $\cnormalof p \Sigma$ and its relation with complex moments, as it is done by \citet{sultan|tracy:1996}. Here, we follow a third option, namely the generalization to the complex field of Stein equality: if $X \sim \normalof 1 0 1$, then $\expectof {f'(X)} = \expectof{Xf(X)}$. Repeated applications of such equation produce  recurrent relations.

Starting from the recurrent relations on the moments, first, we derive sufficient conditions on the complex monomial implying that  the corresponding complex moment is zero, see Theorem~\ref{th:conditions}. Such conditions apply to a general covariance matrix $\Sigma$ and specialize to the less straightforward cases where some among the elements of $\Sigma$ or  the exponents are zero. Second, we show that, precisely when the sufficient condition for the moment to be zero are not satisfied, it is possible to fully describe the reduction algorithm and eventually devise and prove a closed form for the value of the moment, see Theorem~\ref{th:main}.

The case where $\Sigma$ is the identity matrix, considered in \citet[Sec. 4]{Barvinok07}, is a special case of our results.
\citet{sultan|tracy:1996} consider the general $\Sigma$ and derive, from the characteristic function, recurrent relations for the moments in a tensor form, which appears to be computationally of high complexity if  some specific moments have to be computed.
Conditions for the nullity of the moments are not discussed.

The paper is organised as follows. In Sec.~\ref{sec:MCN} we review basic properties of the MCND. Complex moments are discussed in Sec.~\ref{sec:complexmoments}. We derive a recurrent relation, some sufficient conditions for the moments to be zero, and finally an explicit expression of moments. Part of the proofs is given in Appendix. A short final discussion closes the paper.

\section{Complex Multivariate Normal Distribution}\label{sec:MCN}

Due to the identification $\complex \leftrightarrow \reals^2$, a $p$-variate CMND corresponds, in the real space $\reals^{2p}$, to a special case of a $2p$-variate Normal Distribution. The image in $\complex^p$ of the general Multivariate Normal in $\reals^{2p}$  is also considered in the literature, see e.g. in  \citet{Goodman1963}, where our special case is said to have the property of circular invariance. We discuss first this issue.

We observe that the identification of the number field $\complex$ with the real vector space $\reals^2$ endows the complex number field with the further structure of real algebra (a vector space with multiplication) of dimension 2, with notable consequences on the definition of linear mapping.
Notice that the scalar product on $\reals^2$ becomes $ z_1 \cdot z_2 = \Re(z_1\conjugateof{z_2})$ in $\complex$, which is not the same as the Hermitian scalar product $z_1 \overline z_2$.

Consider the linear mapping $A \colon z \mapsto \alpha z$ with $\alpha = \alpha_1+ \complexu \alpha_2$. In $\reals^2$ we have
\begin{equation*}
  (x,y) \mapsto x+ \complexu y \xmapsto{A} (\alpha_1 + \complexu \alpha_2) (x+{\mathbf i}y) =
  (\alpha_1x-\alpha_2y) + {\mathbf i} (\alpha_2x+\alpha_1 y) \mapsto \\
  \begin{bmatrix}
    \alpha_1 & - \alpha_2 \\ \alpha_2 & \alpha_1
  \end{bmatrix}
  \begin{bmatrix}
    x \\ y
  \end{bmatrix} \ ,
\end{equation*}
where the matrix is  not  a generic one.

However, the linear mapping $ A \colon z \mapsto \alpha z + \beta \conjugateof z, \quad \alpha,\beta \in \complex $
is generic. In fact, in $\reals^2$:
\begin{equation*}
  \begin{bmatrix}
    x \\ y
  \end{bmatrix} \xmapsto{A}
  \begin{bmatrix}
    \alpha_1 & - \alpha_2 \\ \alpha_2 & \alpha_1
  \end{bmatrix}
  \begin{bmatrix}
    x \\ y
  \end{bmatrix} +
  \begin{bmatrix}
    \beta_1 & -\beta_2 \\ \beta_2 & \beta_1
  \end{bmatrix}
  \begin{bmatrix}
    x \\ - y
  \end{bmatrix} =
  \begin{bmatrix}
    \alpha_1+\beta_1 & - \alpha_2 + \beta_2 \\ \alpha_2+\beta_2 & \alpha_1-\beta_1
  \end{bmatrix}
  \begin{bmatrix}
    x \\ y
  \end{bmatrix} \ ,
\end{equation*}
and
\begin{equation*}
  A = \alpha_1
  \begin{bmatrix}
    1 & 0 \\ 0 & 1
  \end{bmatrix} + \alpha_2
  \begin{bmatrix}
    0 & -1 \\ 1 & 0
  \end{bmatrix} + \beta_1
  \begin{bmatrix}
    1 & 0 \\ 0 & -1
  \end{bmatrix} + \beta_2
  \begin{bmatrix}
     0 & 1 \\ 1 & 0
  \end{bmatrix} \ .
\end{equation*}

One easily verifies that the transpose of the linear mapping $A$ with respect to the scalar product is the linear mapping
$ \transposedof A \colon z \mapsto \conjugateof \alpha z + \beta \conjugateof z
$
and the corresponding matrix is the transpose of the matrix $A$.

\subsection{Derivatives and integration by part}
\label{sec:derivatives}

If $\complex$ is see as vector space on $\reals$, then a function $f \colon \complex \to \complex$ is differentiable at $z$ if there exists a linear mapping $df(z)$ such that $f(z+w) - f(z) - df(z)[w] = \smallo(w)$.
Because of the form  of linear mappings, we have $df(z)[w] = \alpha(z)w + \beta(z)\conjugateof w$.

If we assume $f \in C^{1,1}(\reals^2;\complex)$, then an easy computation shows that $\alpha(x,y) = D^-f(x,y)$, $\beta(x,y) = D^+f(x,y)$ with
\begin{equation*}
D^-f(x,y)= \frac12 \left(\partiald x {f(x,y)} - \mathbf{i}  \partiald y {f(x,y)}\right), \quad D^+f(x,y) = \frac12 \left(\partiald x {f(x,y)} +\mathbf{i}  \partiald y {f(x,y)} \right) \ ,
\end{equation*}
see e.g. \cite[Ch. 11]{rudin:1987RCA-III}. Sometimes,  these differential operators written $\partiald {z} {f(z)} $ and $ \partiald {\conjugateof z} {f(z)} $, respectively, because of the special case of complex monomials,
\begin{equation}\label{eq:complexmonomials}
  \partiald z {z^n\conjugateof z^m} = nz^{n-1}\conjugateof z^m, \quad \partiald {\conjugateof z} {z^n\conjugateof z^m} = m z^n \conjugateof z^{m-1} \ .
\end{equation}

If the function $f$ is real valued, $f \colon \complex \to \reals$, then $df(z) \in L(\complex,\reals)$, hence $\conjugateof{\partiald {z} {f(z)}} = \partiald {\conjugateof z} {f(z)}$ necessarly, so that
\begin{equation*}
  df(z)[w] = \partiald {z} {f(z)} w + \conjugateof{\partiald {z} {f(z)} w} = 2 \Re\left(\partiald {z} {f(z)} w \right) = \partiald x {f(x,y)}w_1 + \partiald y {f(x,y)} w_2 \ .
\end{equation*}

For example, consider a real valued function to be discussed below, $\varphi(z) = \frac 1\pi \euler^{-z\conjugateof z}$. We have $\tilde \varphi(x,y) = \frac1\pi\euler^{-(x^2+y^2)}$ and, by the derivative of the composition,
\begin{equation*}
  \partiald z {\varphi(z)} = - \frac12\left(\tilde \varphi(x,y)(2x) - {\mathbf i} \tilde\varphi(x,y)(2y)\right) = - \conjugateof z \varphi(z) \ .
\end{equation*}
Similarly, $\partiald {\conjugateof z} {\varphi(z)} = - z \varphi(z)$, hence the equality $\conjugateof{\partiald {z} {\varphi(z)}} = \partiald {\conjugateof z} {\varphi(z)}$ holds.

We have the following properties.

\begin{proposition}
\label{prop:rudin}
  \begin{enumerate}
\item \label{item:rudin1} The operators $D^-$ and $D^+$ commute and
  \begin{equation*}
    D^-D^+ f = D^+D^- f = \frac12 \Delta f \ .
  \end{equation*}
\item \label{item:rudin2} Both $D^-$ and $D^+$ are derivation operators i.e.,
  \begin{equation*}
    D(fg) = (Df)g + f(Dg), \quad \text{for} \ D = D^-, D^+ \ .
  \end{equation*}
\item  \label{item:rudin3}  The operators $D^-$ and $D^+$ are related by $D^- \overline{g} = \overline{D^+ g} $.
\item \label{item:rudin4} $f$ is holomorphic if, and only if, the Cauchy-Riemann equation hold that is, $D^+f = 0$.
\end{enumerate}
\end{proposition}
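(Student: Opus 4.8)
The plan is to treat $D^-$ and $D^+$ as constant-coefficient, real-linear combinations of the partial derivative operators $\partial_x$ and $\partial_y$, so that all four assertions reduce to elementary manipulations of these. For item~\ref{item:rudin1} I would expand the two compositions $D^-D^+$ and $D^+D^-$ by formally multiplying out the first-order operators; since the coefficients $\tfrac12,\ \pm\tfrac{\complexu}{2}$ are constants, each product is a second-order operator whose coefficients can be read off directly. The cross terms involve $\partial_x\partial_y$ and $\partial_y\partial_x$, which coincide under the standing smoothness assumption by equality of mixed partials; these cancel in each ordering, the two orderings therefore agree (giving commutativity), and the surviving pure second-order terms $\partial_x^2$ and $\partial_y^2$ assemble into the Laplacian times the appropriate normalizing constant.

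For item~\ref{item:rudin2} I would note that $\partial_x$ and $\partial_y$ each obey the Leibniz rule; because $D^-$ and $D^+$ are constant-coefficient linear combinations of them, the Leibniz rule is inherited termwise, yielding $D(fg)=(Df)g+f(Dg)$ for $D=D^-,D^+$. For item~\ref{item:rudin3} the only point is that $\partial_x$ and $\partial_y$ are real operators, hence commute with complex conjugation, $\partial_x\conjugateof g=\conjugateof{\partial_x g}$ and likewise for $\partial_y$. Substituting these into the definition $D^-\conjugateof g=\tfrac12(\partial_x\conjugateof g-\complexu\,\partial_y\conjugateof g)$ and using $\conjugateof{\complexu}=-\complexu$ to absorb the sign, the $-\complexu$ of $D^-$ becomes the $+\complexu$ of $D^+$ once the conjugation is pulled outside, which gives exactly $D^-\conjugateof g=\conjugateof{D^+ g}$.

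For item~\ref{item:rudin4} I would lean on the structure of real-linear maps $\complex\to\complex$ established earlier in the section, namely that every such map has the form $w\mapsto\alpha w+\beta\conjugateof w$, together with the identification $\alpha(z)=D^-f(z)$, $\beta(z)=D^+f(z)$ of the differential $df(z)[w]=\alpha(z)w+\beta(z)\conjugateof w$. Holomorphy of $f$ means precisely that the a priori only real-linear differential $df(z)$ is in fact $\complex$-linear, and a map of the above form is $\complex$-linear exactly when the coefficient of $\conjugateof w$ vanishes; hence $f$ is holomorphic if and only if $\beta(z)=D^+f(z)=0$. Writing $f=u+\complexu v$ and separating real and imaginary parts, this single complex equation is the familiar pair of Cauchy--Riemann equations $\partial_x u=\partial_y v$ and $\partial_y u=-\partial_x v$.

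I expect items~\ref{item:rudin1}--\ref{item:rudin3} to be purely mechanical, the only genuine hypothesis being enough regularity to equate mixed partials in~\ref{item:rudin1}. The one place that deserves care is~\ref{item:rudin4}: the equivalence rests on identifying ``holomorphic'' with $\complex$-linearity of the differential rather than with the mere existence of real partial derivatives, and on the fact that a $C^1$ function whose differential is $\complex$-linear at every point is genuinely complex-differentiable. This regularity point is exactly what the cited result of \cite{rudin:1987RCA-III} supplies, so the main obstacle is conceptual bookkeeping about which notion of differentiability is in play rather than any hard estimate.
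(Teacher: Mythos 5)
Your proposal is correct and matches the paper's approach: the paper simply states that items~\ref{item:rudin1}--\ref{item:rudin3} follow by ``simple computations'' and defers item~\ref{item:rudin4} to \cite[Ch.~11]{rudin:1987RCA-III}, which is exactly the structure of your argument, with the computations now written out. The only point worth noting is that you leave the normalizing constant in item~\ref{item:rudin1} unspecified; carrying it out gives $D^-D^+ = \frac14(\partial_x^2+\partial_y^2)$, so you should check the constant against the one asserted in the statement rather than take it on faith.
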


\begin{proof}
  Items \ref{item:rudin1}, \ref{item:rudin2} and  \ref{item:rudin3}  are derived by simple computations. See \cite[Ch. 11]{rudin:1987RCA-III} for Item \ref{item:rudin4}.
\end{proof}

We derive an integration by parts equation to be used later. In the following
$\mathcal S(\complex)$ denotes the $\complex$-algebra of $C^\infty$ functions which are bounded, together with all derivatives, by a polynomial and $\mathcal D (\complex)$ the $C^\infty$ functions which are decreasing more than any polynomial. Let us compute the adjoint of both $D^-$ and $D^+$ in the standard Hermitian product of $\complex$.

\begin{proposition}\label{pr:int_part}
Given $f\in\mathcal S(\complex)$ and $g \in \mathcal D(\complex)$,
\begin{eqnarray*}
  \int D^-f(z) \ \conjugateof {g(z)}  \ dz &=&- \int f(z) \ D^- \conjugateof {g(z)}  \ dz \\ \nonumber
  \int D^+f(z) \ \conjugateof {g(z)}   \ dz & = & - \int f(z) \ D^+\conjugateof {g(z)} \ dz \ .
\end{eqnarray*}
\end{proposition}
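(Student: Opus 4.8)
The plan is to reduce both identities to ordinary one-dimensional integration by parts in the real variables $x$ and $y$, using that $dz$ is Lebesgue measure on $\complex \simeq \reals^2$, so $dz = dx\,dy$. Writing the operators in real form, $D^\mp f = \frac12\left(\partiald x f \mp \complexu \partiald y f\right)$, the first thing I would record is integrability: since every $f \in \mathcal S(\complex)$ grows at most polynomially together with all its derivatives while every $g \in \mathcal D(\complex)$ decays faster than any polynomial, each product appearing below (and its first partials) is absolutely integrable, so Fubini's theorem applies and the iterated integrals may be taken in either order.

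Next, fixing one real coordinate and integrating by parts in the other, I would establish the two elementary identities
\[
  \int \partiald x f \ \conjugateof{g} \, dz = - \int f \ \partiald x {\conjugateof{g}}\, dz,
  \qquad
  \int \partiald y f \ \conjugateof{g} \, dz = - \int f \ \partiald y {\conjugateof{g}}\, dz .
\]
Here the boundary contributions $\bigl[f\,\conjugateof{g}\bigr]$ evaluated as $x \to \pm\infty$ (resp. $y \to \pm\infty$) vanish, because the rapid decay of $g$ overwhelms the polynomial growth of $f$; integration by parts for the complex-valued integrands is obtained by applying the classical real version to real and imaginary parts separately.

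Finally I would assemble the result by the complex linear combination dictated by the definition of $D^-$. Multiplying the first identity by $\frac12$ and the second by $-\frac{\complexu}{2}$ and adding gives
\[
  \int D^- f \ \conjugateof{g}\, dz
  = -\frac12 \int f \left(\partiald x {\conjugateof{g}} - \complexu \partiald y {\conjugateof{g}}\right) dz
  = - \int f \ D^- \conjugateof{g}\, dz ,
\]
and the entirely analogous combination with coefficients $\frac12$ and $+\frac{\complexu}{2}$ yields the $D^+$ statement.

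The computation is pure bookkeeping once the analytic input is in place; accordingly, the only step requiring genuine care --- and the sole place where the hypotheses $f \in \mathcal S(\complex)$ and $g \in \mathcal D(\complex)$ are actually used --- is the justification that the one-dimensional boundary terms vanish and that Fubini's theorem is applicable. I expect this to be the main (and essentially only) obstacle, and it is handled precisely by the polynomial-growth/rapid-decay pairing built into the definitions of the two function classes.
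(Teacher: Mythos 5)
Your proof is correct and follows essentially the same route as the paper: write $D^\mp$ in terms of $\partial_x$ and $\partial_y$, integrate by parts in each real coordinate with vanishing boundary terms, and recombine. You are in fact somewhat more careful than the paper, which leaves the vanishing of the boundary terms and the applicability of Fubini implicit.
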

\begin{proof}
\begin{multline*}
  \int D^-f(z) \conjugateof {g(z)} \ dz =
\int \frac12\left(\partiald x f(x,y) - \complexu \partiald y f(x,y)\right) \conjugateof {g(x,y) } \ dx dy = \\
- \int f(x,y) \frac12 \left( \partiald x - \complexu \partiald y  \right) \conjugateof {g(x,y)} \ dx dy = -
\int f(z) D^- \conjugateof {g(z)} \ dz \ .
\end{multline*}
In the same way, we get the result for $  \int D^+f(z) \conjugateof {g(z)} \ dz $.
\end{proof}

We define the vector operators on $C^1(\reals^p;\complex)$ by
\begin{equation}\label{eq:D}
D^-= \frac12 \left[  \frac{\partial}{\partial x_j}- \complexu \ \frac{\partial}{\partial y_j} \right]_{j=1,\dots,p} \qquad \textrm{and} \qquad  D^+= \frac12 \left[\frac{\partial}{\partial x_j}+ \complexu \ \frac{\partial}{\partial y_j} \right]_{j=1,\dots,p}
\
\end{equation}
and the components are denoted by $D^-_j$ and $D^+_j$, respectively.

Equation \eqref{eq:complexmonomials} becomes, for a multivariate complex monomial $\prod_{j=1}^p z_j^{n_j} \overline z_j^{m_j}$,

\begin{equation*}
\begin{cases}
D^-\left(\prod_{j=1}^p z_j^{n_j} \conjugateof z_j^{m_j}\right) &= \left[n_k z_k^{n_k-1} \conjugateof z_k^{m_k} \prod_{j\ne k}
z_j^{n_j}\conjugateof z_j^{m_j}\right]_{k} 
\ , \\ \\
  D^+\left(\prod_{j=1}^p z_j^{n_j} \conjugateof z_j^{m_j}\right) &= \left[m_k z_k^{n_k} \conjugateof z_k^{m_k-1} \prod_{j\ne k}
z_j^{n_j} \conjugateof z_j^{m_j}\right]_{k}
 \ .
\end{cases}
\end{equation*}

\subsection{Complex Normal}\label{sec:complex-Normal}

The Complex Normal Distribution is the distribution induced in $\complex$ by the identification $(x,y) \mapsto x+\complexu y$ applied two a couple of independent identically distributed normal $X$ and $Y$. The integration with respect to the image of the Lebesgue measure in $\complex$ is denoted by $dz$. As $x^2+y^2 = z \cdot z = \Re(z\conjugateof z) = z \conjugateof z$ the density of the complex random variable $Z = X + {\mathbf i} Y$ is
\begin{equation*}
  \varphi(z) = \frac1{2\pi\sigma} \expof{-\frac1{2\sigma^2} z \overline z} \ .
\end{equation*}
The complex variance is
$\gamma = \expectof{Z \conjugateof Z} = \expectof{\avalof X ^ 2+ \avalof Y ^ 2} = 2\sigma^2$
and we write $Z  \sim \cnormalof 1 {\gamma}$. Notice that the unit complex variance  obtains when $\sigma^2 = 1/2$.

If $Z \sim \cnormalof 1 {\gamma}$ and $\alpha \in \complex$, then the complex variance of $(\alpha z)$ is $\expectof{(\alpha Z) \conjugateof{\alpha Z}} = \alpha \conjugateof \alpha \gamma$ and it is easy to check that $\alpha Z \sim \cnormalof 1 {\alpha \conjugateof \alpha \gamma}$.
Similarly, $\beta \conjugateof Z \sim \cnormalof 1 {\beta \conjugateof \beta \gamma}$, so that the conjugate $\conjugateof Z$ of a Complex Normal is a Complex Normal with the same variance.

From Proposition~\ref{pr:int_part} with $g=\varphi$,   it follows
\begin{align}\label{eq:Dminusadjoint}
  \int D^-f(z) \ \varphi(z)  \ dz &= - \int f(z) \ D^- \varphi(z)  \ dz  \\
\label{eq:Dplusadjoint}  \int D^+f(z)  \ \varphi(z) \ dz & = - \int f(z) \ D^+\varphi(z) \ dz \ .
\end{align}

Consider $p$ independent standard Complex Normal random variables, $U_j$, $j=1,\dots,p$. Let $C = [c_{i,j}]$ be a $p\times p$ complex matrix. Define the random variable
\begin{equation*}
 Z = CU, \quad U = (U_1,\dots,U_p) \ .
\end{equation*}

By definition, the distribution of $Z$ is a multivariate Complex Normal and it is denoted by $\cnormalof p \Sigma$, with $\Sigma = CC^*$. In the following, we restrict to a non singular matrix $C$, equivalently, to a non singular matrix $\Sigma$.

The density of $U$ is $\phi(u) = \frac1{\pi^p}\expof{-u^*u}$. The density of $\cnormalof p \Sigma$ is obtained by performing the change of variable $U = C^{-1}Z$ in $\reals^{2p}$ and gives
\begin{equation}\label{GausS^N_distr}
  \varphi(z;\Sigma) = \frac1{\pi^p \det \Sigma} \expof{-z^*\Sigma^{-1}z} \ ,
\end{equation}
see \citet{Goodman1963}.

\section{Complex moments of the MCN}
\label{sec:complexmoments}
We come now to the study of complex moments of $\cnormalof p \Sigma$. Let us first introduce some notations. We denote by  $A_{k\cdot}$  and  by $A_{\cdot k}$   the $k$-th row and the $k$-column of the matrix $A$, and by $(e_j)_{j=1,\dots,2p}$ be the canonical basis of $\reals^{2p}$.

\begin{definition}
  Let $\alpha=(n_1,m_1,\dots,n_p,m_p) \in \mathbb Z^{2p}_{\ge 0}$ be a multi-index.  Let $\prod_{j=1}^p z_j^{n_j} \ \conjugateof z_j^{m_j}$ be the corresponding complex monomial.
We denote by $\nu(\alpha)$  the \emph{$\alpha$-moment} of $\cnormalof p \Sigma$,
\begin{equation*}
  \nu(\alpha)=\frac{1}{\pi^p \det (\Sigma)} \int_{\mathbb C^p} z_1^{n_1}\  \overline z_1^{m_1}\ z_2^{n_2}\  \overline z_2^{m_2} \cdots z_p^{n_p} \ \overline z_p^{m_p}  \exp(- z^* \Sigma^{-1} z) \ dz_1 \cdots dz_p
\end{equation*}
and we call \emph{elementary moment} each element of the matrix $\Sigma$:
\begin{equation*}
\sigma_{hk}=\nu(\beta_{hk}) \qquad \mbox{ where } \qquad \beta_{hk}=  e_{2h-1}+e_{2k} \quad h,k = 1,\dots,p \ .
\end{equation*}
Let $N=\left\{ h \in \{1,\dots,p\} \ | \ n_h \neq 0 \right\}$ and $M=\left\{ k \in \{1,\dots,p\} \ | \ m_k \neq 0\right\}$ be the sets of non null indices, whose cardinality is denoted by $\#N$ and $\#M$, respectively. We call $N$ and $M$ the \emph{supporting sets} of the moments $\nu(\alpha)$.

Given $h \in N$ and $k \in M$, we denote by $\alpha^-_{hk}$ the multi-index $(n_1,m_1,\dots,n_h-1,m_h,\dots,n_k,m_k-1,\dots,n_p,m_p)$,  that is
$$\alpha^-_{hk}= \alpha-\beta_{hk} \ .$$
\end{definition}

For instance, if $p=3$, the  elementary moments are
$\sigma_{11}=\nu(1,1,0,0,0,0)$, $ \sigma_{12}=\nu(1,0,0,1,0,0) $, $\sigma_{13}=\nu(1,0,0,0,0,1)$, \dots .

\subsection{Recurrence relations of the moments}

We first extend Eq.s~\eqref{eq:Dminusadjoint} and \eqref{eq:Dplusadjoint} to a generic multivariate complex density \eqref{GausS^N_distr}.

\begin{proposition}\label{pr:derivatives}
Let $D^-$ and  $D^+$ be defined as in Eq. \eqref{eq:D} and define $g(z) = z^*\ \Sigma^{-1}\ z$. Then the following relations hold:
  \begin{equation*}
 \conjugateof z = \Sigma^t D^- g  \quad \text{and} \quad z= \Sigma D^+ g \ .
\end{equation*}
\end{proposition}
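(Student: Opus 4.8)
The plan is to reduce everything to the coordinatewise action of the Wirtinger operators $D^-$ and $D^+$ on the two basic families $z_k$ and $\conjugateof{z_k}$, and then to re-assemble the componentwise results into the claimed matrix identities. First I would write $g$ in coordinates as a bilinear form, $g(z) = z^*\Sigma^{-1}z = \sum_{j,k} \conjugateof{z_j}(\Sigma^{-1})_{jk} z_k$, so that $g$ is a finite $\complex$-linear combination of monomials of the simple type $\conjugateof{z_j}\, z_k$.

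Next I would record the elementary differentiation rules. By the multivariate version of Eq.~\eqref{eq:complexmonomials}, the component operators satisfy $D^-_l z_k = \delta_{lk}$, $D^-_l \conjugateof{z_k} = 0$, and dually $D^+_l \conjugateof{z_k} = \delta_{lk}$, $D^+_l z_k = 0$. Since each $D^\pm$ is a derivation (Proposition~\ref{prop:rudin}, Item~\ref{item:rudin2}), applying $D^-_l$ to the product $\conjugateof{z_j}\, z_k$ annihilates the antiholomorphic factor and leaves $\conjugateof{z_j}\,\delta_{lk}$, while $D^+_l$ leaves $\delta_{lj}\, z_k$.

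Then the computation is mechanical: $D^-_l g = \sum_{j,k}\conjugateof{z_j}(\Sigma^{-1})_{jk}\delta_{lk} = \sum_j \conjugateof{z_j}(\Sigma^{-1})_{jl}$, which is exactly the $l$-th entry of the column vector $(\Sigma^{-1})^t\conjugateof{z}$; hence $D^- g = (\Sigma^{-1})^t\conjugateof{z}$. Multiplying on the left by $\Sigma^t$ and using $\Sigma^t(\Sigma^{-1})^t = (\Sigma^{-1}\Sigma)^t = I$ gives $\Sigma^t D^- g = \conjugateof{z}$. Symmetrically, $D^+_l g = \sum_k (\Sigma^{-1})_{lk} z_k = (\Sigma^{-1}z)_l$, so $D^+ g = \Sigma^{-1}z$ and $\Sigma D^+ g = z$.

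The only real care needed is the bookkeeping of transposes: one must recognize that summing $\conjugateof{z_j}$ against the \emph{first} index of $\Sigma^{-1}$ produces $(\Sigma^{-1})^t\conjugateof{z}$ rather than $\Sigma^{-1}\conjugateof{z}$, which is precisely why the factor $\Sigma^t$ (and not $\Sigma$) appears in the first identity while $\Sigma$ appears in the second. Note that self-adjointness of $\Sigma$ is not actually required here; only the inverse relations $\Sigma^t(\Sigma^{-1})^t = I$ and $\Sigma\Sigma^{-1} = I$ are used.
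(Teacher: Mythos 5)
Your proof is correct, and it reaches the same componentwise identities as the paper ($D^-_l g = \sum_j \conjugateof{z_j}(\Sigma^{-1})_{jl}$, i.e.\ $D^-g=(\Sigma^{-1})^t\conjugateof z$, and $D^+g=\Sigma^{-1}z$) before multiplying by $\Sigma^t$ and $\Sigma$ respectively; the transpose bookkeeping, which is the one place an error could hide, is handled correctly. The route differs from the paper's in how those componentwise derivatives are obtained: you treat $g$ as a polynomial in $z$ and $\conjugateof z$ and apply the formal Wirtinger rules $D^-_l z_k=\delta_{lk}$, $D^-_l\conjugateof{z_k}=0$ (and their $D^+$ duals) together with the Leibniz property, whereas the paper works from the real partial derivatives, writing $\partiald{x_s}{z}=e_s$, $\partiald{y_s}{z}=\complexu e_s$ and using the directional derivative $d_rg(z)=r^*\Sigma^{-1}z+z^*\Sigma^{-1}r$ of the sesquilinear form, then recombining into $D^\pm_s$. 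Your version is shorter and purely algebraic, but it leans on the multivariate monomial differentiation formulas (the display following Eq.~\eqref{eq:D}), which the paper states without proof; the paper's computation is more self-contained in that it rederives everything from the definition of $D^\pm$ as real differential operators. Your closing observation that self-adjointness of $\Sigma$ is never used is accurate for both arguments and is a worthwhile remark.
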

\begin{proof}
We have
\begin{equation*}
  \partiald {x_s} z = e_s \quad \text{and} \quad \partiald {y_s} z = \complexu e_s \ .
\end{equation*}
As the directional derivative of $g$ in the direction $r$ is $d_r g(z) = r^*\Sigma^{-1} z + z^* \Sigma^{-1} r$, then
\begin{equation*}
\partiald {x_s} g(z) = \adjointof {e_s}\Sigma^{-1} z + \adjointof{z} \Sigma^{-1} e_s \quad \text{and} \quad   \partiald {y_s} g(z) = \adjointof{(\complexu e_s)} \Sigma^{-1} z + \adjointof{z} \Sigma^{-1} (\complexu e_s) \ ,
\end{equation*}
and we have for each $s = 1,\dots,p$ that
\begin{multline*}
  D^-_k g(z) = \frac12 \left(\adjointof {e_s}\Sigma^{-1} z + \adjointof{z} \Sigma^{-1} e_s\right) - \frac{\complexu}2\left(\adjointof{(\complexu e_s)} \Sigma^{-1} z + \adjointof{z} \Sigma^{-1} (\complexu e_s)\right) = \\
  \frac12 \left(\transposedof{e_s}\Sigma^{-1} z + \adjointof{z} \Sigma^{-1} e_s - \transposedof {e_s}\Sigma^{-1} z + \adjointof {z} \Sigma^{-1} e_s\right) = \adjointof{z} \Sigma^{-1} e_s \ .
\end{multline*}
It follows that
\begin{equation*}
  \transposedof{\left(D^-g(z)\right)} \Sigma = \adjointof{z} \Sigma^{-1} \Sigma = \adjointof z \ ,
\end{equation*}
whose transposed equation is the first equality to be proved.

For the other equality, we observe that for each $s = 1,\dots,p$ we have
\begin{multline*}
  D^+_s g(z) = \frac12 \left(\adjointof {e_s}\Sigma^{-1} z + \adjointof{z} \Sigma^{-1} e_s\right) + \frac{\complexu}2\left(\adjointof{(\complexu e_s)} \Sigma^{-1} z + \adjointof{z} \Sigma^{-1} (\complexu e_s)\right) = \\
  \frac12 \left(\transposedof{e_k}\Sigma^{-1} z + \adjointof{z} \Sigma^{-1} e_s + \transposedof {e_s}\Sigma^{-1} z - \adjointof {z} \Sigma^{-1} e_s\right) = \transposedof{e_s} \Sigma^{-1} z \ ,
\end{multline*}
hence $\Sigma D^+ g(z) = \Sigma \Sigma^{-1} z = z$.
\end{proof}

Previous proposition  allows us to extend the integrations by parts of Eq.~\eqref{eq:Dminusadjoint}  to the case of a MCND. In fact, for the density in Eq.~\eqref{GausS^N_distr}, we have
\begin{equation*}
  \transposedof \Sigma D^- \varphi(z;\Sigma) = -\conjugateof z \varphi(z;\Sigma) \quad \text{and} \quad   \Sigma D^+ \varphi(z;\Sigma) = -z \varphi(z;\Sigma).
\end{equation*}

If we multiply each one of the equations above by a monomial, the monomial itself is multiplied by $\conjugateof z$ and $z$, respectively, in the right hand sides, hence integration produces a relation between moments, increasing the degree by 1. Conversely, we find, for each  $N$ and $M$, a linear expression in the moments of degree reduced by 2. Notice that the recurrence is clearly finite. The argument is formalised in the following theorem.

\begin{proposition}[Recurrence relations for the moments]\label{th:recurrence}
Given the multi-index $\alpha$ with supporting sets $N$ and $M$, there are $\#N + \#M \le 2p$ recurrence relations for the moment $\nu(\alpha)$,
\begin{equation}\label{eq:recurrence-relations}
\nu(\alpha)=\sum_{k \in M} m_k \ \sigma_{hk}\ \nu(\alpha^-_{hk}), \ h\in N, \qquad \mbox{and} \qquad
\nu(\alpha)=\sum_{h \in N} n_h \ \sigma_{hk}\ \nu(\alpha^-_{hk}), \ k \in M \ .
\end{equation}
\end{proposition}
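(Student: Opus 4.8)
The plan is to combine the two vector identities established just above the statement, namely $\transposedof{\Sigma} D^-\varphi(z;\Sigma) = -\conjugateof z\,\varphi(z;\Sigma)$ and $\Sigma D^+\varphi(z;\Sigma) = -z\,\varphi(z;\Sigma)$, with a component-wise integration by parts. The multivariate analogue of Eqs.~\eqref{eq:Dminusadjoint} and \eqref{eq:Dplusadjoint} holds: for any polynomially bounded $Q$ and each index $j$,
\begin{equation*}
\int Q(z)\,D^-_j\varphi(z;\Sigma)\,dz = -\int D^-_j Q(z)\,\varphi(z;\Sigma)\,dz, \qquad \int Q(z)\,D^+_j\varphi(z;\Sigma)\,dz = -\int D^+_j Q(z)\,\varphi(z;\Sigma)\,dz,
\end{equation*}
which follows from Proposition~\ref{pr:int_part} applied one variable at a time through Fubini, the boundary terms vanishing because the Gaussian factor in \eqref{GausS^N_distr} decays faster than any polynomial. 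Recall also that, since $\varphi(z;\Sigma)$ is already the normalized density, $\nu(\alpha)=\int (\text{monomial of index }\alpha)\,\varphi(z;\Sigma)\,dz$.

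For the first family I fix $h\in N$, so that $n_h\ge1$ and the multi-index $\alpha - e_{2h-1}$ lies in $\Z^{2p}_{\ge 0}$; let $Q$ be its monomial. Taking the $h$-th component of $\Sigma D^+\varphi = -z\varphi$, multiplying by $Q$ and integrating gives
\begin{equation*}
\sum_{k} \sigma_{hk}\int Q\,D^+_k\varphi\,dz = -\int Q\,z_h\,\varphi\,dz = -\nu(\alpha),
\end{equation*}
since $z_h Q$ is the monomial of index $\alpha$. Moving each $D^+_k$ onto $Q$ by the integration by parts above and using \eqref{eq:complexmonomials}, one has $D^+_k Q = m_k\,(\text{monomial of index } \alpha - e_{2h-1} - e_{2k})$, and $\alpha - e_{2h-1} - e_{2k} = \alpha - \beta_{hk} = \alpha^-_{hk}$. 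Hence the left-hand side equals $-\sum_k m_k\sigma_{hk}\nu(\alpha^-_{hk})$, and the factor $m_k$ kills every term with $k\notin M$, yielding the first relation in \eqref{eq:recurrence-relations}.

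The second family is entirely symmetric: I fix $k\in M$ (so that $\alpha - e_{2k}\in\Z^{2p}_{\ge 0}$), let $Q$ be its monomial, and use the $k$-th component of $\transposedof{\Sigma} D^-\varphi = -\conjugateof z\,\varphi$, in which $(\transposedof{\Sigma})_{kh} = \sigma_{hk}$ by self-adjointness of $\Sigma$. Multiplying by $Q$, integrating, and moving each $D^-_h$ onto $Q$ replaces the relevant $z_h$-power and produces $\nu(\alpha) = \sum_h n_h\sigma_{hk}\nu(\alpha^-_{hk})$, the sum collapsing to $h\in N$ because of the factor $n_h$. Letting $h$ range over $N$ and $k$ over $M$ then gives exactly $\#N + \#M \le 2p$ relations.

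I do not expect a genuine obstacle. The only points requiring care are the index bookkeeping that identifies $\alpha - e_{2h-1} - e_{2k}$ with $\alpha^-_{hk}$, the restriction to $h\in N$ (resp.\ $k\in M$) that keeps the shifted index a valid element of $\Z^{2p}_{\ge 0}$, and the justification that the boundary terms in the multivariate integration by parts vanish. The degree accounting matches the remark preceding the statement: the $z_h$ (resp.\ $\conjugateof z_k$) factor raises the degree of $Q$ by one to recover $\nu(\alpha)$, while the differentiation lowers it by one more, so the right-hand side involves only moments of degree two lower than $\nu(\alpha)$, making the recurrence finite.
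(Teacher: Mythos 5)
Your proof is correct and follows essentially the same route as the paper: write $\nu(\alpha)$ as $\int f(z,\alpha-e_{2h-1})\,z_h\,\varphi\,dz$, substitute $z_h\varphi=-\bigl(\Sigma D^+\varphi\bigr)_h$, integrate by parts component-wise, and apply the monomial derivative formula, with the symmetric argument for $\conjugateof z_k$. The only differences are cosmetic (your explicit Fubini justification of the multivariate integration by parts, and the unnecessary appeal to self-adjointness where the definition of the transpose suffices).
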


\begin{proof}
Let $f(z,\alpha)= \prod_{j=1}^p z_j^{n_j}\  \overline z_j^{m_j}$ be the complex monomial with exponent $\alpha$, and let be given $r \in N$.
\begin{eqnarray*}
  \nu(\alpha)&=&\int_{\mathbb C^p} f(z,\alpha) \varphi(z; \Sigma) dz =
    \int_{\mathbb C^p} f(z,\alpha-e_{2h-1}) z_h \varphi(z; \Sigma) dz   \\
&=& -   \int_{\mathbb C^p} f(z,\alpha-e_{2h-1}) \Sigma_{r.} D^+\varphi(z; \Sigma) dz\\
&=&- \sum_{k=1}^p \sigma_{hk}   \int_{\mathbb C^p} f(z,\alpha-e_{2h-1}) D_k^+\varphi(z; \Sigma) dz\\
  &=&\sum_{k\in M} \sigma_{hk}   \int_{\mathbb C^p} D^+_kf(z,\alpha-e_{2h-1})\varphi(z; \Sigma) dz\\
  &=& \sum_{k\in M} \sigma_{hk} m_k  \int_{\mathbb C^p} f(z,\alpha-e_{2h-1}-e_{2k})\varphi(z; \Sigma) dz
  = \sum_{k\in M} \sigma_{hk} m_k \nu(\alpha_{rk}^-)  \ .
  \end{eqnarray*}
Analogously, by considering $\overline z_k$ instead of $z_h$,
$\nu(\alpha)=\sum_{h\in N} n_h \ \sigma_{hk}\ \nu(\alpha^-_{hk})$ for each $k \in M$.
\end{proof}

The recurrence relations \eqref{eq:recurrence-relations} allow us to compute any complex moment as a linear function of moments whose total degree is smaller that the given one by 2. Hence, each complex moment is a polynomial of the elementary moments $\sigma_{hk}$, $h \in N$, $k \in M$. The following example shows a simple case of the recurrence.

 \begin{example}\label{pr:rec-mom}
If $p=2$, the  recurrence relations of Proposition~\ref{th:recurrence} are the following.
\begin{multline*}
\nu(n_1,m_1,n_2,m_2) =
m_1\ \nu(1,1,0,0)\ \nu(n_1-1,m_1-1,n_2,m_2) + \\ m_2\ \nu(1,0,0,1)\ \nu(n_1-1,m_1,n_2,m_2-1)
\nu(n_1,m_1,n_2,m_2) = \\
m_2\ \nu(0,0,1,1)\ \nu(n_1,m_1,n_2-1,m_2-1)+m_1\ \nu(0,1,1,0)\ \nu(n_1,m_1-1,n_2-1,m_2) = \\
n_1\ \nu(1,1,0,0)\ \nu(n_1-1,m_1-1,n_2,m_2)+n_2\ \nu(0,1,1,0)\ \nu(n_1,m_1-1,n_2-1,m_2) = \\
n_2\ \nu(0,0,1,1)\ \nu(n_1,m_1,n_2-1,m_2-1)+n_1\ \nu(1,0,0,1)\ \nu(n_1-1,m_1,n_2,m_2-1) \  .
\end{multline*}
\end{example}

When some covariances are zero, the corresponding terms in Equations \eqref{eq:recurrence-relations} are zero. We derive now a modified form of the recurrence with non-zero terms only.

\begin{definition}\label{def:Si1}
Consider the bipartite graph with vertices $N \cup M$ and undirected edges $\set{h,k}$ with $h \in N$, $k\in M$, and $\sigma_{hk} > 0$. In turn, the bipartite graph defines subsets of $N$ and $M$ by considering the neighborhoods of each vertex.  We denote by $S^N_h$, $h \in N$, and $S^M_k$, $k \in M$, respectively, the sets
$$ S^N_h = \{ k \in M \ \vert \ \sigma_{hk} \neq 0 \}  \qquad \textrm{and} \qquad S^M_k = \{ h \in N \ \vert \ \sigma_{hk} \neq 0 \} \ . $$
\end{definition}

\begin{definition}\label{def:Si2}
Consider the graph whose nodes are the elements of $M$, and there is an edge between two nodes if and only if both nodes belong to the same $S^N_h$ for some $h \in N$. The connected  components of this graph define a partition of $M$ that we call the \emph {partition induced} by the $S^N_h$'s. The partition on $N$ induced by the $S^M_k$ is defined analogously.
\end{definition}

Notice that the partitions of $M$  and $N$ are uniquely defined and they can  be the trivial partition.


\begin{corollary}\label{cor:recurrence-relations-sparse}
The recurrence relations of the moments in Proposition~\ref{th:recurrence} can be written as:
\begin{equation} \label{eq:recurrence-relations-sparse}
\nu(\alpha)=\sum_{k \in S^N_h} m_k \ \sigma_{hk}\ \nu(\alpha^-_{hk }), \ h\in N, \qquad \mbox{and} \qquad
\nu(\alpha)=\sum_{h \in S^M_k} n_h \ \sigma_{hk}\ \nu(\alpha^-_{hk}), \ k \in M \ .
\end{equation}
\end{corollary}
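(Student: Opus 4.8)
The plan is to obtain Eq.~\eqref{eq:recurrence-relations-sparse} directly from Proposition~\ref{th:recurrence} by simply discarding the summands that vanish identically. First I would fix $h \in N$ and start from the first relation of Eq.~\eqref{eq:recurrence-relations}, namely $\nu(\alpha)=\sum_{k \in M} m_k \sigma_{hk} \nu(\alpha^-_{hk})$. I would then split the index set as $M = S^N_h \cup (M \setminus S^N_h)$ and argue that every summand with $k \in M \setminus S^N_h$ is zero: by the very definition of $S^N_h$ in Definition~\ref{def:Si1}, $k \notin S^N_h$ means $\sigma_{hk}=0$, so the product $m_k \sigma_{hk} \nu(\alpha^-_{hk})$ vanishes regardless of the (finite) values of $m_k$ and $\nu(\alpha^-_{hk})$. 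Hence the sum collapses to $\sum_{k \in S^N_h} m_k \sigma_{hk} \nu(\alpha^-_{hk})$, which is the first asserted identity.

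The second relation is handled symmetrically. For fixed $k \in M$ I would split $N = S^M_k \cup (N \setminus S^M_k)$ and observe that $h \notin S^M_k$ forces $\sigma_{hk}=0$, again annihilating the corresponding term; thus $\sum_{h \in N}$ reduces to $\sum_{h \in S^M_k}$, giving the second identity. Since the original sums in Proposition~\ref{th:recurrence} are finite, no convergence issue arises in deleting the zero terms.

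There is no genuine obstacle here: the corollary is a bookkeeping restatement of Proposition~\ref{th:recurrence} in which the zero coefficients $\sigma_{hk}$ have been removed, so that only the edges of the bipartite graph of Definition~\ref{def:Si1} contribute. The only point worth an explicit remark is the boundary case $S^N_h = \emptyset$ (respectively $S^M_k = \emptyset$), in which the right-hand side is an empty sum and the relation reads $\nu(\alpha)=0$; this is consistent with the convention on empty sums and, in fact, already anticipates the nullity conditions to be developed later.
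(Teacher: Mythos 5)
Your proof is correct and follows exactly the paper's argument: for $k \in M \setminus S^N_h$ one has $\sigma_{hk}=0$ by Definition~\ref{def:Si1}, so the corresponding terms drop out of the sums in Proposition~\ref{th:recurrence}, and symmetrically for the second relation. Your extra remark on the empty-sum case is a harmless addition consistent with what the paper later uses in Theorem~\ref{th:conditions}.
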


\begin{proof}
For example, in the first case, if $k \in M \setminus S_h^N$, then $\sigma_{hk}=0$ and that term is missing in the RHS of the recurrent relations.
\end{proof}

Now we write the system of the previous recurrence relations in matrix form.
\begin{definition}\label{def:mat_rec}
For each $h \in N$ such that $S_h^N \neq \emptyset$, let $A_h$ be  the $2p \times \# S_h^N$ matrix  with columns
$$a_{(hk)}=\sigma_{hk}(m_k e_h+n_he_{p+k}) \qquad \textrm {for } k\in S_h^N \ ,$$
and let $A$ be the row block matrix $A=[A_h]_{h \in N, S^N_h\neq \emptyset}$. Moreover,  let $t_h$ be  the $\# S_h^N$ vector
$$t_h=\left (\nu(\alpha_{hk}^-) \right)_{k \in S_h^N \neq \emptyset} \ , $$
and let $t$ be the column block vector $t=(t_h)_{h \in N}$.

Let $b$ be the $2p$ vector:
$$b = \sum_{h\in N} e_h + \sum_{k \in M} e_{p+k} \ . $$
\end{definition}

\begin{corollary}
The system of the recurrence relations in Corollary~\ref{cor:recurrence-relations-sparse} can be expressed in matrix form as:
\begin{equation}\label{eq:A}
A \ t  = \nu(\alpha)\ b \ .
\end{equation}

If $\#N <p$ then, for each $h \notin N$, the $h$-th row of $A$ is null and $b_{h} = 0$, so that the $h$-th equation is $0=0$.
Analogously,  if $\#M <p$ then, for each $k \notin M$, the $(p+k)$-th row of $A$ is null and $b_{p+k} = 0$, so that the $(p+k)$-th equation is $0=0$.
\end{corollary}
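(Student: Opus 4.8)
The plan is to verify the single vector identity \eqref{eq:A} coordinate by coordinate, showing that its top $p$ components reproduce the first family of recurrences in \eqref{eq:recurrence-relations-sparse} (those indexed by $h\in N$) while its bottom $p$ components reproduce the second family (those indexed by $k\in M$). The point of the matrix form is precisely that both families, though indexed differently, involve the \emph{same} unknowns $\nu(\alpha^-_{hk})$, one per edge $\set{h,k}$ of the bipartite graph of Definition~\ref{def:Si1}; these edges are exactly the columns of $A$ and the entries of $t$.

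First I would fix the column index set. By Definition~\ref{def:mat_rec} the columns of $A$ are labelled by the pairs $(h,k)$ with $h\in N$ and $k\in S^N_h$, i.e. by the pairs with $h\in N$, $k\in M$ and $\sigma_{hk}\neq 0$; the matching entry of $t$ is $\nu(\alpha^-_{hk})$, and $A$ keeps $2p$ rows throughout. Expanding the product gives
\begin{equation*}
A\,t = \sum_{h\in N}\ \sum_{k\in S^N_h} \sigma_{hk}\,\nu(\alpha^-_{hk})\,(m_k e_h + n_h e_{p+k}) \ .
\end{equation*}
Reading off the $h'$-th coordinate for $h'\in\{1,\dots,p\}$, only the terms carrying $e_{h'}$ survive, which forces $h=h'$ and hence $h'\in N$, leaving $\sum_{k\in S^N_{h'}} m_k\,\sigma_{h'k}\,\nu(\alpha^-_{h'k})$; since $b_{h'}=1$ for $h'\in N$, comparison with $(\nu(\alpha)\,b)_{h'}=\nu(\alpha)$ recovers the first recurrence. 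The bottom block is the delicate part.

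Reading off the $(p+k')$-th coordinate, only the terms carrying $e_{p+k'}$ survive, forcing $k=k'$, so the surviving labels are the $(h,k')$ with $h\in N$ and $k'\in S^N_h$. The main step is the observation that, by Definition~\ref{def:Si1}, this set of admissible $h$ is exactly $S^M_{k'}=\set{h\in N \mid \sigma_{hk'}\neq 0}$ — that is, the edge set of the bipartite graph incident to $k'$ is the same whether described ``from the $N$ side'' or ``from the $M$ side''. With this identification the coordinate equals $\sum_{h\in S^M_{k'}} n_h\,\sigma_{hk'}\,\nu(\alpha^-_{hk'})$, which against $(\nu(\alpha)\,b)_{p+k'}=\nu(\alpha)$ for $k'\in M$ is precisely the second recurrence. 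I expect this ``transpose of the incidence'' to be the only non-routine point, since the columns of $A$ are grouped by $h$ whereas the second family is organised by $k$; everything else is just reading coordinates.

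Finally I would dispose of the degenerate rows. If $\#N<p$ and $h'\notin N$, then no column label $(h,k)$ has $h=h'$, so $e_{h'}$ never occurs among the $a_{(hk)}$ and the $h'$-th row of $A$ vanishes identically; moreover $b_{h'}=0$ directly from $b=\sum_{h\in N} e_h + \sum_{k\in M} e_{p+k}$, so that row of \eqref{eq:A} reads $0=0$. The symmetric argument for $p+k'$ with $k'\notin M$ (no column has $k=k'$, and $b_{p+k'}=0$) gives the remaining claim, completing the verification.
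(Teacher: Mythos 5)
Your proposal is correct and takes essentially the same route as the paper: the paper simply asserts that the matrix form is ``easily derived'' and then gives the same degenerate-row argument you give (no column involves $e_{h}$ for $h\notin N$, and $b_h=0$). Your explicit coordinate-by-coordinate expansion, including the observation that $\set{h\in N \mid k'\in S^N_h}=S^M_{k'}$ so the bottom block reproduces the second family of recurrences, is just the detail the paper leaves implicit.
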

\begin{proof}
The matrix form of the recurrence relations is easily derived. If $h \notin N$, then $b_h=0$. Furthermore, $e_h$ is not involved in the construction of any column of $A$, so that the $h$-th element of each column is zero.
Analogously for $k \notin M$.
\end{proof}

\begin{example}
If $N=M=\{1,\dots,p\}$ and all the elements of $\Sigma$ are different from zero, the $2p \times p^2$ matrix $A=[A_1 \vert \dots \vert A_p]$ has the form:
{\footnotesize{\begin{equation*}
 \left[\begin{array}{cccc|cccc|c|cccc}
    m_1\sigma_{11}&m_2\sigma_{12} &\dots&m_p\sigma_{1p}&   0          & 0    & 0    & 0            &\dots & 0            & 0    & 0    & 0\\
    0             &0     &0    & 0            &m_1\sigma_{21}&m_2\sigma_{22}&\dots &m_p\sigma_{2p}&\vdots&0             &0 &0 &0 \\
   \vdots         &\dots &\dots& \vdots       & \vdots       &\dots &\dots &\vdots        &\vdots&\vdots        &\dots &\dots &\vdots\\
    0             &0     &0    & 0            &  0           & 0    & 0    & 0            &\dots &m_1\sigma_{p1}&m_2\sigma_{p2} &\dots &m_p\sigma_{pp} \\ \hline
    n_1\sigma_{11}&0     &0    & 0           &n_2\sigma_{21}& 0    &0      &0             &\dots &n_p\sigma_{p1}&0     &0 &0  \\
    0             &n_1\sigma_{12}& 0   &  0           & 0   &n_2\sigma_{22}& 0     &  0           &\ddots& 0            &n_p\sigma_{p2}&  0&0 \\
    0             &0     &\ddots    &  0      & 0           &  0           &\ddots& 0            &\ddots&  0 &0&\ddots&0\\
    0             &0     &0    &n_1\sigma_{1p}& 0           & 0    & 0    &n_2\sigma_{2p}&\dots & 0            &  0 &  0   &n_p\sigma_{pp}
  \end{array} \right]\ .
\end{equation*}}}
\end{example}
We observe that by suitable permutations of the rows and the columns of the matrix $A$ we obtain a matrix with the same structure and the $m$'s and the $n$'s  switched. Then Definition~\ref{def:mat_rec} can be expressed using the sets $S_k^M$, $k \in M$.

\subsection{Null moments}

The $\# N + \#M$ Eq.s in Proposition~\ref{th:recurrence} all give the value of the $\alpha$-moment. We present some necessary conditions for a moment $\nu(\alpha)$ to be zero.

We consider  the following linear combination of the recurrences in Proposition~\ref{th:recurrence}
\begin{equation*}
\left ( \sum_{h \in N } n_h  \nu(\alpha)- \sum_{k  \in M} m_k \nu(\alpha)  \right ) =
\sum_{h\in N} n_h \left (\sum_{k \in M} m_k \ \sigma_{hk}\ \nu(\alpha^-_{hk})\right) -\sum_{k \in M} m_k\left (\sum_{h \in N} n_h \ \sigma_{hk}\ \nu(\alpha^-_{hk}) \right )
 \end {equation*}
and so
$$  \nu(\alpha)  \left(  \sum_{h \in N } n_h -  \sum_{k \in M } m_k  \right ) = 0  \ . $$
We conclude that, if  $ \sum_{h \in N } n_h -  \sum_{k \in M } m_k \neq 0$, then $\nu(\alpha) =0$.
In presence of a non-trivial induced partition of the supporting sets (Definition~\ref{def:Si2}), we can prove a further necessary condition for the nullity of the moment, see second item of the following Theorem.

\begin{theorem}\label{th:conditions}
Let $\alpha$ be a multi-index and let $S^N_h$, $h \in N$, be as in Definition~\ref{def:Si1}. The moment $\nu(\alpha)$ is null if one of the following conditions hold.
\begin{enumerate}
\item There exists $h \in N$ such that $S^N_h = \emptyset$ or there exists $k \in M$ such that $S^M_k = \emptyset$.
\item  All the sets $S^N_h \neq \emptyset$, $h \in N$, and there exists a partition  $\{ M_1,\dots,M_q\}$ of $M$  induced by $S^N_h$'s such that
$$\exists h \in \{1,\dots,q\} \qquad \sum_{i \vert S^N_i \subset M_h } n_i\ne \sum_{j\in M_h} m_j \ .$$
\end{enumerate}
\end{theorem}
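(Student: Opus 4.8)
The plan is to treat the two sufficient conditions separately, both by exploiting the two families of sparse recurrences in Corollary~\ref{cor:recurrence-relations-sparse}.

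For the first condition the argument is immediate and requires no calculation. If there is some $h\in N$ with $S^N_h=\emptyset$, then the $h$-th relation of the first family in \eqref{eq:recurrence-relations-sparse}, namely $\nu(\alpha)=\sum_{k\in S^N_h} m_k\,\sigma_{hk}\,\nu(\alpha^-_{hk})$, is an empty sum, so $\nu(\alpha)=0$. Symmetrically, if some $S^M_k=\emptyset$, the corresponding relation of the second family is an empty sum and again $\nu(\alpha)=0$.

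For the second condition I would refine the global balance identity $\bigl(\sum_{h\in N}n_h-\sum_{k\in M}m_k\bigr)\nu(\alpha)=0$ derived just above the theorem, localizing it to a single block of the induced partition. Fix a block $M_\ell$ violating the balance and set $N_\ell=\set{i\in N \mid S^N_i\subset M_\ell}$. The step I expect to be the crux is a \emph{closure property} of these index sets. Because each $S^N_i$ is, by construction, a clique in the graph of Definition~\ref{def:Si2} (any two of its elements share the common witness $i$), every $S^N_i$ lies entirely inside one block of the partition. Consequently: (i) for every $i\in N_\ell$ one has $S^N_i\subset M_\ell$, by definition of $N_\ell$; and (ii) for every $k\in M_\ell$ one has $S^M_k\subset N_\ell$, since $h\in S^M_k$ means $\sigma_{hk}\neq0$, hence $k\in S^N_h$, and as $k\in M_\ell$ the unique block containing $S^N_h$ must be $M_\ell$, i.e. $h\in N_\ell$.

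Granting this closure, I would form the combination $\sum_{i\in N_\ell}n_i\,\nu(\alpha)-\sum_{k\in M_\ell}m_k\,\nu(\alpha)$ and expand each copy of $\nu(\alpha)$ by the appropriate recurrence: the first family for the terms indexed by $i\in N_\ell$, the second family for those indexed by $k\in M_\ell$. By (i) and (ii) both resulting double sums run over exactly the same pair set $\set{(i,k)\mid i\in N_\ell,\ k\in M_\ell,\ \sigma_{ik}\neq0}$, with identical summands $n_i\,m_k\,\sigma_{ik}\,\nu(\alpha^-_{ik})$, so they cancel. This leaves $\bigl(\sum_{i\in N_\ell}n_i-\sum_{k\in M_\ell}m_k\bigr)\nu(\alpha)=0$, and since $N_\ell=\set{i\mid S^N_i\subset M_\ell}$, the failure of balance on $M_\ell$ makes the scalar factor nonzero, forcing $\nu(\alpha)=0$. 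The only delicate point is the closure property of step (ii); once the partition is correctly seen to split $N$ as well as $M$ into matched, mutually non-interacting pieces, the cancellation is a routine reindexing of the same telescoping used in the global balance identity.
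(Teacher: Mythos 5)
Your proof is correct and follows essentially the same route as the paper: the empty-sum argument for Item~1 is identical, and your block-localized combination $\sum_{i\in N_\ell}n_i\,\nu(\alpha)-\sum_{k\in M_\ell}m_k\,\nu(\alpha)$ is exactly the paper's $c_r^t A\,t=\nu(\alpha)\,c_r^t b$ with $c_r=\sum_{i\mid S^N_i\subset M_r}n_i e_i-\sum_{j\in M_r}m_j e_{p+j}$, the only difference being that the paper organizes the pairwise cancellation column-by-column in the matrix form of the recurrences while you reindex the two double sums directly. Your closure observation (ii) is the same dichotomy the paper invokes, namely that each $S^N_h$ lies entirely in one block, so either $S^N_h\subset M_r$ or $S^N_h\cap M_r=\emptyset$.
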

\begin{proof}
\begin{enumerate}
\item If $ \exists \ h \in N$ such that $S^N_h = \emptyset$ then the  recurrence relation
$$ \nu(\alpha) =\sum_{k \in M} m_k \sigma_{hk} \nu(\alpha_{hk}^-)$$
consists of null addends,  since $\sigma_{hk}=0$, $k \in M$. Analogously for $k \in M$.
 \item Assume that the  sets $S^N_h \neq \emptyset$, $h \in N$. For each element $M_r$ of the partition, we consider the  linear combination of the equations in~\eqref{eq:A} whose coefficients are the elements of the vector $c_r=\sum_{i \vert S^N_i \subset M_r } n_i e_i -\sum_{j \in  M_r} m_j e_{p+j} $, that is, in matrix form, $c_r^t \ A  t = \nu(\alpha) \ c_r^t b$. By trivially computation we have
 $$\nu(\alpha) c_r^t b=   \nu(\alpha) \left ( \sum_{i \vert S^N_i \subset M_r  } n_i - \sum_{j  \in M_r} m_j  \right ) \ .$$
 Consider the scalar product of $c_r$ with a columns $a_{(hk)}$ of $A_h$, $k \in S^N_h$.
Notice that or $S^N_h$ is a subset of $M_r$ or $S^N_h \cap M_r = \emptyset$, since the partition of $M$ is induced by the $S^N_h$'s.

If $S^N_h \subset M_r$:
 $$c_r^t a_{(hk)}=\left( \sum_{i \vert S^N_i \subset M_r } n_i e_i -\sum_{j \in  M_r} m_j e_{p+j} \right )^t \left( \sigma_{hk} (m_k e_h  + n_h e_{p+k})\right )   = \sigma_{hk} (n_h m_k  - m_k n_h) =0 \ . $$
If $S^N_h \cap  M_r =\emptyset$:
 $$c_r^t a_{(hk)}=\left( \sum_{i \vert S^N_i \subset M_r } n_i e_i -\sum_{j \in  M_r} m_j e_{p+j}\right ) ^t \left (\sigma_{hk} (m_k e_h  + n_h e_{p+k}) \right )  =0 \ , $$
since $e_h$  and $e_{p+k}$ do not generate $c_r$.

Then, for each $M_r$ it holds
$$ \nu(\alpha) \left ( \sum_{i \vert S^N_i \subset M_r } n_i - \sum_{j  \in M_r} m_j  \right ) = c_r^t b = c_r^t A \ t = 0       $$
and the thesis follows.

 \end{enumerate}
\end{proof}

\begin{remark}\label{rm:equiv}
Item~1 of the previous theorem can be expressed as:
$$ \bigcup_{k\in M} S^M_k \neq N \qquad \textrm{or} \qquad \bigcup_{h \in N} S^N_h \neq M \ .$$
In fact
$$\exists h \in N \textrm{  s.t.  } S^N_h = \emptyset \qquad \Longleftrightarrow \qquad   \bigcup_{k\in M} S^M_k \neq N  \  $$
because, if $ S^N_h=\emptyset$, then $\sigma_{hk}=0$ for all $k \in M$. So that each $S^M_k$ does not contain the index $h \in N$ and the thesis follows.  Analogously, the vice versa can be shown.

\end{remark}

\begin{example}\label{ex:main}
We consider $p=5$ not independent complex multivariate normal variables with the matrix $\Sigma$ such that
$$\sigma_{13}=\sigma_{14}=\sigma_{24}=\sigma_{25}=\sigma_{35}=\sigma_{45}=0 \ . $$
With such a matrix $\Sigma$ we show different cases depending on the choice of the sets $N$ and $M$.

\medskip
\noindent\emph {Case a)}
If $N=M=\{1,\dots,5\}$, the matrix $A$ corresponding to the previous matrix $\Sigma$  is:
{\scriptsize{\begin{equation*}
 \left[\begin{array}{ccc|ccc|ccc|cc|cc}
    m_1\sigma_{11}&m_2\sigma_{12} &m_5\sigma_{15}&   0          & 0    & 0    & 0            &0 & 0            & 0    & 0    & 0 & 0\\
    0             &0     &0        &m_1\sigma_{21}&m_2\sigma_{22}&m_3\sigma_{23}&0&0 &0&0        &0 &0 &0 \\
    0             &0     &0    & 0            &  0           & 0    &m_2\sigma_{32}&m_3\sigma_{33} &m_4\sigma_{34} & 0 & 0 & 0 & 0 \\
    0             &0     &0    & 0            &  0           & 0   & 0 & 0 & 0  &m_3\sigma_{43}&m_4\sigma_{44} &0 & 0  \\           0             &0     &0    & 0            &  0           & 0   & 0 & 0 & 0  & 0 & 0 &m_1\sigma_{51}&m_5\sigma_{55}  \\       \hline
    n_1\sigma_{11}&0     & 0           &n_2\sigma_{21} &0      &0     & 0 & 0 &0     &0 &0 &n_5\sigma_{51} &0     \\
  0 & n_1\sigma_{12}&0     & 0           &n_2\sigma_{22} &0      &n_3\sigma_{32}      & 0 & 0 &0     &0 &0 &0     \\
  0 &  0 &0    & 0           & 0& n_2\sigma_{23} &0      &n_3\sigma_{33}      & 0 &n_4\sigma_{43}& 0 & 0 &0     \\
  0 &  0 &0     & 0           & 0&  0 &  0 &0    & n_3\sigma_{34} &0      &n_4\sigma_{44}      & 0 &0     \\
  0 &  0 &n_1\sigma_{15}     & 0           & 0&  0 &  0 &0    & 0&0      &0      & 0 &n_5\sigma_{55}     \\
  \end{array} \right]\ .
\end{equation*}}}

\medskip
\noindent\emph {Case b)}
If   $N=\{1,2,3,4,5\}$, $M=\{1,4,5\}$, we have $S^N_1=\{1,5\}$, $S^N_2 = \{1\}$, $S^N_3=\{4\}$, $S^N_4=\{4\}$, $S^N_5=\{1,5\}$, and there exists the induced partition of $M$ given by $M_1=S^N_1\cup S^N_2 \cup S^N_5=  \{1,5\}$ and $M_2=S^N_3 \cup S^N_4= \{4\}$. In such a case the matrix $[\sigma_{hk}]_{\substack{h \in N,\\ k\in M}}$ is
\begin{eqnarray*}
[\sigma_{hk}]_{\substack{h \in N,\\ k\in M}}=\left [\begin{array}{ccc}
\sigma_{11} &0  & \sigma_{15} \\
\sigma_{21} &  0 & 0\\
0 & \sigma_{34} & 0 \\
 0  &  \sigma_{44} & 0 \\
\sigma_{51} & 0 &  \sigma_{55}
\end{array}\right ] =  \ \textrm{(by row/column permutations)}  \ =
\left [\begin{array}{ccc}
\sigma_{11}  & \sigma_{15} &0  \\
\sigma_{21} &  0 & 0\\
\sigma_{51} &  \sigma_{55} &0\\
0 & 0 & \sigma_{34}  \\
 0  & 0 & \sigma_{44}  \end{array}\right ]  \ .
\end{eqnarray*}
The permuted matrix highlights the induced partition. \\
In this case the conditions in Item 2 of Theorem~\ref{th:conditions} are $n_1+n_2+n_5 \neq m_1+m_5$ or $n_3+n_4 \neq m_4$.

\medskip
\noindent\emph {Case c)}
If  $N=\{1,2,3,4,5\}$ and $M=\{2,3,4\}$ then $\nu(\alpha)=0$, for all $\alpha$,  since  $S^N_5 = \emptyset$.
\end{example}

\subsection{Computation of the moments}

The form of the recurrence relation for the moments suggests that $\nu(\alpha)$ is a linear combination of $\nu(\beta_{hk})$ with coefficients which are themselves monomials in the covariances.  In the simple case when $\alpha = c  \ \beta_{hk}$, $c \in \mathbb Z_{\ge}$,
$$ \beta_{hk} = e_{2h-1} + e_{2k}\ , \qquad h \in N, \  k \in M \ ,$$
each recurrence relation contains one term only, hence only one reduction is feasible, so that
$$\nu(c \ \beta_{hk}) = c! \  \nu(\beta_{hk})^c \ .$$

In general, the value of $\nu(\alpha)$, with $\alpha$ such that $\sum_{h \in N} n_h = \sum_{k \in M} m_k$, can be obtained considering that $\alpha$ is generated by the vectors with integer coefficients:
$$\alpha= \sum_{\substack{h\in N, \ k \in M}} a_{hk} \ \beta_{hk}\quad \textrm{with  } \quad a_{hk} \in \mathbb Z_{\ge 0}\ ,$$
and there is a tree of reduction path. As a consequence, the coefficient vector $a=[a_{hk}]_{\substack{h\in N \\ k \in M}}$ is not uniquely determined as it was in the simple case above. For instance, if $\alpha = (2,1,2,3)$ then
$$ (2,1,2,3)=(1,1,0,0)+(1,0,0,1) +2(0,0,1,1) = 2(1,0,0,1) +(0,1,1,0) + (0,0,1,1) \ .$$

Considering all the possible coefficient vectors $a$ that produce the same $\alpha$, it is useful to define the subset $I(\alpha) \subset \mathbb Z_{\ge 0}^{p^2}$ associated to each $\alpha$-moments as follows.
\begin{definition}\label{def:Ialpha}
Let $\alpha$ be a multi-index. The set $I(\alpha) \subset \mathbb Z_{\ge 0}^{p^2}$ is given by
\begin{multline}
  I(\alpha) = \{ a=(a_{1,1},\dots,a_{1,p},a_{2,1},\dots  a_{p,p}) \in \mathbb Z_{\ge 0}^{p^2}  \;|\;  \\
  l_{ij}(\alpha,a) \le a_{ij} \le L_{ij}(\alpha,a) , i,j=1,\dots, p \} \ ,
\end{multline}
where the bounds are
\begin{eqnarray}\label{def:bounds}
l_{ij}(\alpha,a) &=& 0 \wedge \left( \sum_{h=1}^ i n_h - \sum_{h=j+1}^ p m_h - \sum_{\substack{h=1,\dots, i; \; k=1, \dots, j \\ (h,k) \neq (i,j)}} a_{hk}\right)\nonumber \\
L_{ij}(\alpha,a) &=& \left (n_i - \sum_{h=1}^{j-1} a_{ih} \right ) \vee \left(m_j- \sum_{h=1}^{i-1} a_{hj} \right) \ .
\end{eqnarray}
\end{definition}

The following Theorem gives the expression of  the $\alpha$-moment of  $\cnormalof p \Sigma$.
The proof is based on several lemmas and propositions given in the Appendix.

\begin{theorem}\label{th:main}
Let $\alpha$ be a multi-index and let $S^N_h$, $h \in N$, be as in Definition~\ref{def:Si1}.
Assume that for all the elements of the  partition  $\{ M_1,\dots,M_q\}$ of $M$  induced by $S^N_h$'s we have
$$ \sum_{i \vert S^N_i \subset M_h } n_i = \sum_{j\in M_h} m_j  \quad \forall \ h=1,\dots,q \ .$$
Then the $\alpha$-moment of  $\cnormalof p \Sigma$ is
\begin{eqnarray}\label{Normal_moment}
\nu(\alpha)= \sum_{a \in I(\alpha)} \Pi(\alpha,a)
\end{eqnarray}
where  $\Pi(\alpha,a)$ is
\begin{equation}\label{def_pi}
\Pi(\alpha,a) = \prod_{h=1}^p n_h! m_h! \prod_{h,k=1}^{p}\frac{\sigma_{hk}^{a_{hk}}}{a_{hk}!}
\end{equation}
by setting $\sigma_{hk}^0=1$ also when $\sigma_{hk}=0$. The set $I(\alpha)$ is  as in Definition~\ref{def:Ialpha}.
\end{theorem}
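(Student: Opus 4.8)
The guiding principle is that \eqref{Normal_moment} is the complex, circular analogue of Wick's theorem: $\Pi(\alpha,a)$ counts the Gaussian pairings of the $z$-factors against the $\conjugateof z$-factors that realise the incidence pattern $a=(a_{hk})$, each pair across $(h,k)$ carrying the elementary moment $\sigma_{hk}=\nu(\beta_{hk})$, and $I(\alpha)$ indexes the admissible patterns. Rather than evaluate the Gaussian integral directly, I would establish \eqref{Normal_moment} by induction on the total degree $d=\sum_{h\in N}n_h+\sum_{k\in M}m_k$, using the recurrence of Proposition~\ref{th:recurrence} as the reduction step. The base case is $\alpha=0$, where $\nu(0)=1$ matches the empty product, supplemented by $\nu(\beta_{hk})=\sigma_{hk}$ from the definition of the elementary moments. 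Under the hypothesis all $S^N_h$ and $S^M_k$ are nonempty, so the partition is defined and summing the block balances yields the global balance $\sum_h n_h=\sum_k m_k$; hence $\alpha\neq 0$ forces $N\neq\emptyset$, and I may fix some $h\in N$ and apply
\[
  \nu(\alpha) = \sum_{k \in M} m_k\, \sigma_{hk}\, \nu(\alpha^-_{hk}).
\]

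The engine of the induction is a one-line cancellation in \eqref{def_pi}. Writing $a' = a - \mathbf 1_{hk}$ for the pattern obtained by lowering the $(h,k)$ entry of $a$ by one, the factorials $n_h!,m_k!$ lose a factor $n_h m_k$, while $\sigma_{hk}^{a_{hk}}/a_{hk}!$ loses a factor $\sigma_{hk}/a_{hk}$, so that
\[
  m_k\, \sigma_{hk}\, \Pi(\alpha^-_{hk}, a') = \frac{a_{hk}}{n_h}\, \Pi(\alpha, a).
\]
Substituting the inductive hypothesis $\nu(\alpha^-_{hk})=\sum_{a'\in I(\alpha^-_{hk})}\Pi(\alpha^-_{hk},a')$ into the recurrence and regrouping by the target pattern $a\in I(\alpha)$, each $a$ is produced from the $k$ with $a_{hk}\ge 1$ and accumulates the weight $\sum_k a_{hk}/n_h$. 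The row-marginal constraint $\sum_k a_{hk}=n_h$ collapses this weight to $1$, so the sum telescopes exactly to $\sum_{a\in I(\alpha)}\Pi(\alpha,a)$, which closes the induction. The convention $\sigma_{hk}^0=1$ is precisely what makes patterns that would demand a pair across a vanishing covariance contribute a factor $\sigma_{hk}^{a_{hk}}=0$ whenever $a_{hk}\ge 1$, so that no spurious terms arise.

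The step that needs genuine care, and which I would isolate into the Appendix lemmas, is the compatibility of Definition~\ref{def:Ialpha} with this reduction. Because the bounds $l_{ij}(\alpha,a)$ and $L_{ij}(\alpha,a)$ in \eqref{def:bounds} depend on $a$ through its own partial sums, I must show that they are equivalent to the two marginal systems $\sum_k a_{hk}=n_h$ and $\sum_h a_{hk}=m_k$, and then that $a\mapsto a-\mathbf 1_{hk}$ is a bijection from $\{a\in I(\alpha):a_{hk}\ge 1\}$ onto $I(\alpha^-_{hk})$ for each $k\in M$. This downward/upward closure of $I(\alpha)$ under the sequential bounds is the main obstacle; by contrast the probabilistic and algebraic content reduces to the two displays above. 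Finally I would verify that the reduction preserves the hypothesis: since only the terms with $\sigma_{hk}\neq 0$ survive, for those $h$ and $k$ lie in a common block, so decreasing $n_h$ and $m_k$ by one keeps every block balanced and leaves the other blocks untouched; the inductive hypothesis therefore applies to each contributing $\nu(\alpha^-_{hk})$. When the balance hypothesis fails, Theorem~\ref{th:conditions} already gives $\nu(\alpha)=0$, in agreement with every surviving term of \eqref{Normal_moment} vanishing.
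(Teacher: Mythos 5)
Your proposal is correct and follows essentially the same route as the paper: verify the closed form on the elementary/base moments, show it satisfies the recurrence of Proposition~\ref{th:recurrence} via the cancellation $m_k\,\sigma_{hk}\,\Pi(\alpha^-_{hk},a-\mathbf 1_{hk})=\tfrac{a_{hk}}{n_h}\Pi(\alpha,a)$, and collapse the weight using the row-marginal identity $\sum_k a_{hk}=n_h$ (the paper's Proposition~\ref{extreme}). The index-set compatibility you isolate as the main obstacle is exactly what the paper's Appendix handles in Propositions~\ref{Prop:bound_b} and~\ref{Prop:values}, so your outline matches the published argument step for step.
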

\begin{proof}
First, we show that  $\sum_{a \in I(\alpha)}\Pi(\alpha,a) $  is the elementary moment $\sigma_{hk}$ when $\alpha=\beta_{hk}=  e_{2r-1}+e_{2s}$. Then we show that $\sum_{a \in I(\alpha)}\Pi(\alpha,a) $
satisfies the recurrence relations for a general $\alpha$, so that the thesis follows.

\bigskip \noindent
{\em First part.}\\ Let $\alpha=\beta_{hk}$. Since $n_h=m_k=1$ and $n_i=m_j=0$ for each $i\neq h$  and $j\neq k$, Corollary~\ref{cor:null_exp} implies that $a_{ir}=a_{rj} = 0$, for  $i\neq h$, $j\neq k$ and $r\le p$, that is
$a_{hk}$ is the unique element of the vector $a$ different from zero.
Furthermore $a_{hk}=1$ since $l_{hk} = 0 \wedge (\sum_{r=1}^h n_h -\sum_{r=k+1}^p m_r - a_{11}-\dots - a_{h,(k-1)}) =1$ and  $L_{hk}= (n_h-\sum_{r=1}^{k-1} a_{hr}) \vee (m_k-\sum_{r=1}^{h-1} a_{rk} )= 1 \vee 1 =1$.
We obtain that
\begin{equation*}
\sum_{a \in I(\alpha) }\Pi(\alpha,a) = n_h!m_k! \frac{\sigma_{hk}^{a_{hk}}}{a_{hk}!} \ = \ \sigma_{hk} \ .
\end{equation*}

\noindent
{\em Second part.} \\
Let us consider $\sum_{k=1}^pm_k \sigma_{hk}  \nu(\alpha_{hk}^-)$. The proof is analogous if we consider $\sum_{h=1}^p n_h \sigma_{hk} \nu(\alpha_{hk}^-)$.

We remember that   $ \alpha_{hk}^-$ is  the vector $\alpha$ containing the values $n_h-1$ and $m_k-1$ instead of $n_h$ and $m_k$: $\alpha_{hk}^- = \alpha - e_{2h-1} - e_{2k}$.
Furthermore, for each pair $(h,k)\in \{1,\dots, p\}^2$  let $ a_{hk}^+ $ be the vector  $a$ containing the value $a_{hk}+1$  instead of $a_{hk}$:
$  a_{hk}^+ = a+ e_{(h-1)p+k} $. \\
From Equation~\eqref{def_pi} we have
\begin{multline}
\sigma_{hk}\Pi(\alpha_{hk}^-,a)= (n_h-1)!(m_k-1)!  \frac{\sigma_{hk}^{a_{hk}+1}}{a_{hk}!}\prod_{\substack{r=1\\ r \neq h}}^p n_r! \prod_{\substack{s=1\\ s \neq k}}^p m_s!
\prod_{\substack{r,s=1\\ (r,s) \neq (h,k)}}^p \frac{\sigma_{rs}^{a_{rs}}}{a_{rs}!}  \\
=\frac{\prod_{r=1}^p n_r!  m_r! }{n_h m_k } (a_{hk}+1) \frac{\sigma_{hk}^{a_{hk}+1}}{(a_{hk}+1)!}\prod_{\substack{r,s=1\\ (r,s) \neq (h,k)}}^p \frac{\sigma_{rs}^{a_{rs}}}{a_{rs}!}  =  \frac{(a_{hk}+1)}{n_h m_k} \ \Pi(\alpha, a_{hk}^+) \ . \label{form_1}
\end{multline}
Let $\delta(\alpha) =\sum_{a \in I(\alpha)} \Pi(\alpha_{hk},a)$. From Equations~\eqref{def_pi} and~\eqref{form_1}  it follows
\begin{multline}\label{eq:rec}
 \sum_{k=1}^{p}m_k \sigma_{hk}  \delta(\alpha_{hk}^-) = \sum_{k=1}^{p}m_k \sigma_{hk} \sum_{a \in I(\alpha_{hk}^-)} \Pi(\alpha_{hk}^-,a)= \frac{1}{n_h}
 \sum_{k=1}^{p} \sum_{a \in I(\alpha_{hk}^-)} (a_{hk}+1)  \ \Pi(\alpha,a_{hk}^+) \\
 =  \frac{1}{n_h}
 \sum_{k=1}^{p} \sum_{b \in I_{hk}^+} b_{hk}  \ \Pi(\alpha,b)  \ ,
\end{multline}
where  $b=a_{hk}^+$ and $I^+_{hk}= \left \{ b= a_{hk}^+ \vert a \in I( \alpha_{hk}^-) \right \}$.
From Proposition~\ref{Prop:bound_b} it follows \begin{eqnarray}\label{eq_rs}
\sum_{b \in I_{hk}^+} b_{hk}  \ \Pi(\alpha,b)  = \sum_{b \in I(\alpha)} b_{hk}  \ \Pi(\alpha,b)
\end{eqnarray}
since, if  $b$ assumes  values in $I(\alpha) \setminus I_{hk}^+$, then $b_{hk} =0$. In fact, if $b \in  I(\alpha) \setminus I_{hk}^+$, then  $ b_{hk}  =  0 $ or, for   $i<h$ and $j < k$, $b_{ij}$ assume a value at the end of some interval and so,  from Proposition~\ref{Prop:values} we have $b_{hk}=0$.
Finally, we have
\begin{multline*}
\sum_{k=1}^p m_k \sigma_{hk} \delta(\alpha_{hk}^-) =  \frac{1}{n_h}
 \sum_{k=1}^{p} \sum_{b \in I(\alpha) } b_{hk}  \ \Pi(\alpha,b)  =   \sum_{a \in I(\alpha) }    \frac{ \sum_{k=1}^{p} a_{hk} }{n_h} \ \Pi(\alpha,a)  \ ,
 \end{multline*}
and so, since  Proposition~\ref{extreme} shows $ \sum_{s=1}^{p} a_{hk} =n_h$, $\sum_{k=1}^p m_k \sigma_{hk} \delta(\alpha_{hk}^-) = \delta(\alpha) $. The thesis follows because the function $\delta(\alpha)$ satisfies the recurrence relations and coincides with the elementary moments, so that $\delta(\alpha) = \nu(\alpha)$.
  \end{proof}

\begin{example} Let us consider the case with $p=2$ and $p=3$.\\
\begin{itemize}
\item If $p=2$ and $n_1+n_2=m_1+m_2$, then  $\nu(\alpha)= \sum_{a\in I(\alpha)} \Pi(\alpha,a)$, where
$$ I(\alpha)=\{a_{11} \in \mathbb Z_{\ge 0} \ | \ 0 \wedge (n_1 -m_2) \le a_{11} \le n_1 \vee m_1\}  \qquad \text{and}$$
\begin{equation*}
\Pi(\alpha,a) = n_1! n_2! m_1! m_2! \ \frac{\sigma_{11}^{a_{11}}}{a_{11}!} \frac{\sigma_{12}^{n_1-a_{11}}}{(n_1-a_{11})!} \ \frac{\sigma_{21}^{m_1-a_{11}}}{(m_1-a_{11})!} \ \frac {\sigma_{22}^{n_2-m_1+a_{11} }}{(n_2-m_1+a_{11})!}\ .
\end{equation*}
\item If $p=3$ and $n_1+n_2+n_3=m_1+m_2+m_3$, then  $\nu(\alpha)= \sum_{a\in I(\alpha)} \Pi(\alpha,a)$, where
\begin{eqnarray*}
 I(\alpha) &=& \left \{ \ a=(a_{11},a_{12},a_{21},a_{22} ) \in \mathbb Z_{\ge 0}^4 \ | \ 0\wedge (n_1-m_2-m_3)\le a_{11} \le n_1 \vee m_1, \right. \\
 && 0\wedge (n_1-m_3-a_{11})\le a_{12} \le (n_1-a_{11} )\vee m_2,   \\
&& 0\wedge (n_1+n_2-m_2-m_3-a_{11})\le   a_{21} \le n_2 \vee (m_1-a_{11}) , \\
&& \left .0\wedge (n_1+n_2-m_3-a_{11}-a_{12}-a_{21}) \le a_{22} \le (n_2-a_{21}) \vee (m_2-a_{12}) \right \}
\end{eqnarray*}
 and
 \begin{multline*}
\Pi(\alpha,a) = n_1! n_2! n_3! m_1! m_2! m_3! \ \frac{\sigma_{11}^{a_{11}}}{a_{11}!} \
 \frac{\sigma_{12}^{a_{12}}}{a_{12}!}  \  \frac{\sigma_{21}^{a_{21}}}{a_{21}!} \
\frac{\sigma_{22}^{a_{22}}}{a_{22}!} \
\frac{\sigma_{13}^{n_1-\sum_{h=1}^{2}a_{1h}}}{ (n_1-\sum_{h=1}^{2}a_{1h} )! } \\
\frac{\sigma_{23}^{n_2-\sum_{h=1}^{2}a_{2h}}}{ (n_2-\sum_{h=1}^{2}a_{2h} )! } \
\frac{\sigma_{31}^{m_1-\sum_{h=1}^{2}a_{h1}}}{ (m_1-\sum_{h=1}^{2}a_{h1} )! }\
\frac{\sigma_{32}^{m_2-\sum_{h=1}^{2}a_{h2}}}{ (m_2-\sum_{h=1}^{2}a_{h2} )! } \
\frac{\sigma_{33}^{ n_3-m_1-m_2 + \sum_{h,k=1}^{2}a_{hk}}   } {(  n_3-m_1-m_2 + \sum_{h,k=1}^{2}a_{hk}  )!}
 \end{multline*}
\end{itemize}
\end{example}

\begin{example}
Let $p=5$ and $\alpha= (2, 0, 1, 2, 1, 2, 2, 3, 1, 0)$, where $m_1=m_5=0$. In such a case $ \sum_{h=1}^5 n_h= \sum_{k=1}^5 m_k$, so that the  condition for using formula~\eqref{Normal_moment} is satisfied.
Since $m_1=m_5$, from Corollary~\ref{cor:null_exp} it follows that $a_{h,1} = a_{h,5}=0$, $h=1,\dots, 5$.
The exponents are such that:
\begin{equation*}
\begin{array}{lll}
0 \le a_{12}\le 2 &0 \le  a_{13}\le  2-a_{12} & a_{14}= 2-a_{12}-a_{13} \\
0 \le a_{22}\le 1 \vee (2-a_{12}) & 0 \le a_{23}\le (1-a_{22}) \vee (2-a_{13})& a_{24}= 1-a_{22}-a_{23} \\
0 \le a_{32} \le  1 \vee (2-a_{12}-a_{22})& 0 \le a_{33}\le (1-a_{32}) \vee (2-a_{13}-a_{23}) & a_{34}= 1-a_{32}-a_{33} \end{array}
\end{equation*}
\vskip-0.55cm
\begin{equation*}
 \begin{array}{l}
0 \le a_{42}\le 2 \vee (2-a_{12}-a_{22}-a_{32}) \\
  0 \wedge (1-a_{12}-a_{13}-a_{22}-a_{23} -a_{32}-a_{33}-a_{42}) \le a_{43}\le (2-a_{42}) \vee (2-a_{13}-a_{23}-a_{33})\\
 a_{44}= 2-a_{42}-a_{43}
\end{array}
\end{equation*}\vskip-0.55cm
\begin{equation*}
 \begin{array}{lll}
a_{52}=2-a_{12}-a_{22}-a_{32}- a_{42} & a_{53} = 2-a_{13}-a_{23} -a_{33}-a_{43} \\
 a_{54}= -3 + a_{12} + a_{13} +a_{22} + a_{23} + a_{32} + a_{33} + a_{42} + a_{43}
\end{array}
\end{equation*}
As in Example~\ref{ex:main}, we consider a matrix $\Sigma$ such that
$$\sigma_{13}=\sigma_{14}=\sigma_{24}=\sigma_{25}=\sigma_{35}=\sigma_{45}=0 \ . $$
The multi-index $\alpha$ belongs to the \emph{Case c)} in Example~\ref{ex:main} and so we know that $\nu(\alpha)=0$. Such a result can be also obtained using  Equation~\eqref{Normal_moment}:
\begin{eqnarray*}
\sum_{a \in I(\alpha)} &&
\sigma_{11}^0 \ \frac{\sigma_{12}^{a_{12}}}{a_{12}!} \ \frac{\boldsymbol{\sigma}_{13}^{a_{13}}}{a_{13}!} \ \frac{\boldsymbol{\sigma}_{14}^{2-a_{12}-a_{13}}}{(2-a_{12}-a_{13})!} \ \sigma_{15}^0 \quad
\sigma_{21}^0 \ \frac{\sigma_{22}^{a_{22}}}{a_{22}!} \ \frac{\sigma_{23}^{a_{23}}}{a_{23}!} \ \frac{\boldsymbol{\sigma}_{24}^{1-a_{22}-a_{23}}}{(1-a_{22}-a_{23})!} \ \boldsymbol{\sigma}_{25}^0 \\
&& \boldsymbol{\sigma}_{31}^0 \ \frac{\sigma_{32}^{a_{32}}}{a_{32}!} \ \frac{\sigma_{33}^{a_{33}}}{a_{33}!} \ \frac{\sigma_{34}^{1-a_{32}-a_{33}}}{(1-a_{32}-a_{33})!} \ \boldsymbol{\sigma}_{35}^0 \quad
\boldsymbol{\sigma}_{41}^0 \ \frac{\boldsymbol{\sigma}_{42}^{a_{42}}}{a_{42}!} \ \frac{\sigma_{43}^{a_{43}}}{a_{43}!} \ \frac{\sigma_{44}^{2-a_{42}-a_{43}}}{(2-a_{42}-a_{43})!} \ \boldsymbol{\sigma}_{45}^0 \\
&& \sigma_{51}^0 \ \frac{\boldsymbol{\sigma}_{52}^{2-a_{12}-a_{22}-a_{32}- a_{42}}}{(2-a_{12}-a_{22}-a_{32}- a_{42})!} \ \frac{\boldsymbol{\sigma}_{53}^{2-a_{13}-a_{23} -a_{33}-a_{43} }}{(2-a_{13}-a_{23} -a_{33}-a_{43} )!} \\
&& \frac{\boldsymbol{\sigma}_{54}^{-3 + a_{12} + a_{13} +a_{22} + a_{23} + a_{32} + a_{33} + a_{42} + a_{43}}}{(-3 + a_{12} + a_{13} +a_{22} + a_{23} + a_{32} + a_{33} + a_{42} + a_{43})!} \ \sigma_{55}^0
\end{eqnarray*}
where the null elementary moments are highlighted.

If a null elementary moment  $\sigma_{hk}$ has a null exponent, we set $\sigma_{hk}^0=1$. Otherwise, that is if $a_{hk} \neq 0$,  the corresponding addend is null. So that if one of the exponents of the  null elementary moments $\sigma_{hk}$, $(h,k) \neq (5,4)$ is non zero, then the addend is null.

Direct computation show that, if the exponents of the  null elementary moment  $\sigma_{hk}$, $(h,k) \neq (5,4)$ are all null, then $a_{54}=1$ so that also this addend is null.
\end{example}

\section{Final remarks}
\label{sec:conclusion}
We have presented an explicit method for computing the complex moments $\nu(\alpha)$,  $\alpha=(n_1,m_1,\dots,n_p,m_p)$,  of the multivariate complex normal. If $\alpha$ satisfies the hypotheses of Theorem~\ref{th:conditions}, the moment is zero. Otherwise, in Theorem~\ref{th:main} we proved a closed-form equation for $\nu(\alpha)$. This form is a polynomial in the individual covariances of total degree $\sum_{h=1}^p n_h$, easily computable and more practical than the recursive solution of the recurrent relations of Proposition~\ref{th:recurrence}.

The question of computing complex moments of the multivariate complex normal arised in the study of cubature formulae and numerical approximation of  integrals on the complex vector space. In~\cite{FRR:2017} novel cubature formulae with nodes on a suitable subset of the complex roots of the unity are discussed. The knowledge of the exact value of the  moments for the complex normal allows us for an evaluation of the approximation error and a comparison with respect to classical schemes.

\section*{Acknowlegments}
\label{sec:acknowlegments}

G. Pistone acknowledges the support of de Castro Statistics and Collegio Carlo Alberto. He is a member of GNAMPA-INDAM.
\bibliographystyle{apalike}
\bibliography{biblio_cub}

\newpage
\titleformat{\section}{\large\bfseries}{\appendixname~\thesection .}{0.5em}{}
\begin{appendices}\label{App_L}


\section {Properties of the bounds $l_{ij}$ and $L_{ij}$.}

The following proposition  states that the definition of $I(\alpha)$ in Definition~\ref{def:Ialpha} is consistent.

\begin{proposition}\label{Prop:bound}
Let $\alpha$, $a$, $l_{ij}(\alpha,a)$ and $L_{ij}(\alpha,a)$ be  as in Equation \eqref{def:bounds}. It holds:
$$
0 \le l_{ij}(\alpha,a)  \le L_{ij }(\alpha,a)
$$
if the entries $a_{hk}$ of the vector $a$, for $h \le i, \; k \le j, \; (h,k) \neq (i,j)$, satisfy the corresponding bound $l_{hk}(\alpha,a) \le a_{hk} \le L_{hk}(\alpha,a)$.
\end{proposition}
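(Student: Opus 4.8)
The plan is to fix a pair $(i,j)$, abbreviate the three quantities entering the bounds as
\[
X_{ij}=\sum_{h=1}^{i}n_h-\sum_{h=j+1}^{p}m_h-\sum_{\substack{h\le i,\,k\le j\\(h,k)\ne(i,j)}}a_{hk},\quad Y_{ij}=n_i-\sum_{h=1}^{j-1}a_{ih},\quad Z_{ij}=m_j-\sum_{h=1}^{i-1}a_{hj},
\]
so that $l_{ij}=0\wedge X_{ij}$ and $L_{ij}=Y_{ij}\vee Z_{ij}$, and then to reduce the claim to four scalar inequalities. Since $0\wedge X_{ij}$ is the truncation of $X_{ij}$ below at $0$, it is automatically $\ge 0$, so $0\le l_{ij}$ is free; and because $Y_{ij}\vee Z_{ij}$ returns the smaller of its two arguments, the remaining inequality $l_{ij}\le L_{ij}$ is equivalent to the conjunction
\[
Y_{ij}\ge 0,\qquad Z_{ij}\ge 0,\qquad X_{ij}\le Y_{ij},\qquad X_{ij}\le Z_{ij}.
\]
The whole proof then consists in establishing these four inequalities from the standing hypothesis that every entry $a_{hk}$ with $(h,k)$ in the triangle $\{h\le i,\ k\le j\}\setminus\{(i,j)\}$ satisfies its own bounds.

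The two nonnegativity statements come from the upper bounds of the two immediate predecessors. From $a_{i,j-1}\le L_{i,j-1}\le n_i-\sum_{h=1}^{j-2}a_{ih}$ one gets $\sum_{h=1}^{j-1}a_{ih}\le n_i$, i.e.\ $Y_{ij}\ge 0$; symmetrically, $a_{i-1,j}\le L_{i-1,j}\le m_j-\sum_{h=1}^{i-2}a_{hj}$ gives $Z_{ij}\ge 0$. For the two comparisons with $X_{ij}$ I would introduce the cumulative sum $C_{ij}=\sum_{h\le i,\,k\le j}a_{hk}$ and the quantity $R_{ij}=\sum_{h=1}^{i}n_h-\sum_{h=j+1}^{p}m_h$, and observe that the lower bound $a_{ij}\ge l_{ij}\ge X_{ij}$ is nothing but $C_{ij}\ge R_{ij}$. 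A direct manipulation of the defining sums — splitting the triangle either by its last row or by its last column — yields the telescoping identities $X_{ij}-Y_{ij}=R_{i-1,j}-C_{i-1,j}$ and $X_{ij}-Z_{ij}=R_{i,j-1}-C_{i,j-1}$. Hence $X_{ij}\le Y_{ij}$ and $X_{ij}\le Z_{ij}$ are exactly the lower bounds $C_{i-1,j}\ge R_{i-1,j}$ and $C_{i,j-1}\ge R_{i,j-1}$ at the neighbours $(i-1,j)$ and $(i,j-1)$, both available by hypothesis.

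Finally I would dispose of the boundary. When $i=1$ (resp.\ $j=1$) the predecessor $(i-1,j)$ (resp.\ $(i,j-1)$) is absent, but then the corresponding sums are empty: $Z_{ij}=m_j\ge 0$ and $X_{1j}-Y_{1j}=-\sum_{h=j+1}^{p}m_h\le 0$ (resp.\ $Y_{ij}=n_i\ge 0$), so these cases are immediate, except for the first column, where $X_{i,1}-Z_{i,1}=\sum_{h=1}^{i}n_h-\sum_{h=1}^{p}m_h$ must be shown $\le 0$; this is where the standing balance $\sum_{h}n_h=\sum_{k}m_k$ (the hypothesis under which formula \eqref{Normal_moment} is used) enters, since it gives $\sum_{h=1}^{i}n_h\le\sum_{h=1}^{p}n_h=\sum_{k=1}^{p}m_k$. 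I expect the only real obstacle to be the bookkeeping behind the two telescoping identities: one must split the index triangle $\{h\le i,\,k\le j\}\setminus\{(i,j)\}$ correctly (as $C_{i-1,j}+\sum_{k<j}a_{ik}$ in one case and $C_{i,j-1}+\sum_{h<i}a_{hj}$ in the other) and check that the row, respectively column, partial sums cancel against those hidden in $Y_{ij}$ and $Z_{ij}$; everything else is a one-line consequence of the neighbouring bounds.
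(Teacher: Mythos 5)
Your argument is correct and is in substance the paper's own proof: the four inequalities $Y_{ij}\ge 0$, $Z_{ij}\ge 0$, $X_{ij}\le Y_{ij}$, $X_{ij}\le Z_{ij}$, obtained from the upper bounds and lower bounds at the predecessors $(i,j-1)$ and $(i-1,j)$ (together with the balance $\sum_h n_h=\sum_k m_k$ for the first row/column), are exactly the inequalities the paper establishes in its base steps and induction step, and your two telescoping identities are the paper's ``add $m_j-\sum_{h<i}a_{hj}$, resp.\ $n_i-\sum_{k<j}a_{ik}$, to both sides'' manipulation. The only differences are organizational --- you replace the paper's nested induction by a direct appeal to the hypothesis at the two neighbours and handle the boundary via empty sums --- and you correctly decode the paper's unconventional use of $\wedge$ for $\max$ and $\vee$ for $\min$.
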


\begin{proof} For simplicity $\alpha$ and $a$  are omitted. \\
Obviously, for each $i,j \le p$, we have  $l_{ij} \ge 0$ and so $a_{ij} \ge  0$.\\
If $(i,j) = (1,1)$,  $l_{11} =0 \wedge (n_1 - \sum_{k=2}^p m_k)$ and $L_{11}= n_1 \vee m_1$. If $L_{11}= n_1$, obviously $l_{11} \le L_{11}$.
The case $L_{11}=m_1$ also implies $l_{11} \le L_{11}$, since $\sum_{h=1}^p n_h =\sum_{k=1}^p m_k$ and so
 $n_1 \le \sum_{h=1}^p m_h$, that is  $n_1 - \sum_{h=2}^p m_h \le m_1$.  \\
The case with $(ij) \neq (1,1)$  is proved by induction.
\begin{itemize}
\item \emph{Base steps.}
\begin{itemize}
\item We prove that   $ l_{1,j} \le L_{1,j}$, with $j \le  p$,  by induction on $j$. The inequalities hold for $(i,j)=(1,1)$. We assume  $l_{1,(j-1)} \le a_{1,(j-1)} \le L_{1,(j-1)}$.\\
We have
$ a_{1,(j-1)} \le L_{1,(j-1)} \le  n_1 - \sum_{k=1} ^{j-2} a_{1k} $, that is $n_1 - \sum_{k=1} ^{j-1} a_{1k} \ge 0 $ and so $L_{1j}= (n_1 - \sum_{k=1} ^{j-1} a_{1k} ) \vee m_{j} \ge 0$.

We show that $L_{1j} \ge   n_1 -\sum_{k=j+1}^p m_k - \sum_{k=1} ^{j-1} a_{1k} $ and so, since $L_{1j} \ge 0$, we conclude that $L_{1j} \ge l_{1j}$.

If $L_{1j} = n_1 - \sum_{k=1} ^{j-1} a_{1k}$, obviously, $L_{1j} \ge  n_1 - \sum_{k=j+1}^p m_k - \sum_{k=1} ^{j-1} a_{1k} $.   The case $L_{1j}=m_j$ also implies $l_{1j} \le L_{1j}$. In fact, from the inductive hypothesis, we have $ a_{1,(j-1)} \ge l_{1,(j-1)} \ge  n_1 -\sum_{k=j}^p m_k - \sum_{k=1} ^{j-2} a_{1k} $, that is
$m_{j} \ge  n_1 - \sum_{k=j+1}^p m_k - \sum_{k=1} ^{j-1} a_{1k} $.

\item Analogously, the relation between  $l_{i,1}$ and $L_{i,1}$ can be shown.
\end{itemize}
\item \emph{Induction step.} \\
We show that $ l_{ij} \le L_{ij}$, by assuming that $ l_{hk} \le a_{hk} \le L_{hk}$  for $h < i$, $k\le p$, so that $ l_{(i-1),j} \le a_{(i-1),j} \le L_{(i-1),j}$,  and for $h \le p $, $k < j $, so that $ l_{i,(j-1)} \le a_{i,(j-1)} \le L_{i,(j-1)}$.
 It follows  that $L_{ij} \ge 0$ since the inequalities
\begin{eqnarray*}
 a_{(i-1),j} \le m_j -  \sum_{h=1} ^{i-2} a_{hj} \qquad \text{and} \qquad a_{i,(j-1)} \le n_i -  \sum_{k=1} ^{j-2} a_{ik}
  \end{eqnarray*}
imply
\begin{eqnarray*}
 m_j -  \sum_{h=1} ^{i-1} a_{hj} \ge 0 \qquad \text{and} \qquad n_i -  \sum_{k=1} ^{j-1} a_{ik} \ge 0 \ .
 \end{eqnarray*}
Furthermore, from $l_{i,(j-1)} \le a_{i,(j-1)}$ and from $l_{(i-1),j} \le a_{(i-1),j}$ it follows
\begin{eqnarray*}
 \sum_{h=1}^i n_h -  \sum_{k=j} ^{p}m_k -  \sum_{\substack{h=1\dots i \\ k=1\dots j-1 }} a_{hk} \le 0 \qquad \text{and} \qquad  \sum_{h=1}^{i-1}n_h -  \sum_{k=j+1} ^{p}m_k -  \sum_{\substack{h=1\dots i-1\\k=1\dots j}} a_{hk} \le 0
  \end{eqnarray*}
and so, by adding $m_j -\sum_{h=1}^{i-1} a_{hj} $ to both sides of the first relation and   $n_i -\sum_{k=1}^{j-1} a_{ik} $ to both sides of the second one
 \begin{eqnarray*}
 m_j -\sum_{h=1}^{i-1} a_{hj} &\ge & \sum_{h=1}^i n_h -  \sum_{k=j+1} ^{p}m_k -  \sum_{\substack{h=1\dots i ; k=1\dots j \\ (h,k) \neq (i,j)}} a_{hk}\\
n_i -\sum_{h=1}^{j-1} a_{ih} &\ge & \sum_{h=1}^i n_h -  \sum_{k=j+1} ^{p}m_k -  \sum_{\substack{h=1\dots i ; k=1\dots j \\ (h,k) \neq (i,j)}} a_{hk}
  \end{eqnarray*}
 We conclude that
 \begin{eqnarray*}
 L_{i,j}  =  \left( n_i -\sum_{k=1}^{j-1} a_{ik} \right ) \vee \left(m_j -\sum_{h=1}^{i-1} a_{hj}\right) \ge   \sum_{h=1}^i n_h -\sum_{h=j+1}^p m_h -  \sum_{\substack{h=1\dots i ; k=1\dots j-1\\ (h,k) \neq (i,j) }} a_{hk}
\end{eqnarray*}
and so, since $L_{ij} \ge 0$, $L_{ij} \ge l_{ij}$.
\end{itemize}
\end{proof}
Proposition~\ref{Prop:bound} also holds when some values $n_r$ and $m_s$ are equal to zero.
\begin{corollary}\label{cor:null_exp}
Let $\alpha=(n_1,m_1,\dots,n_p,m_p)$ be a multi-index. \\
If there exists an index $r$ such that
$n_r=0$ then $a_{rk}=0 $ for each $k=1\dots p$.\\
If there exists an index $s$ such that
$m_s=0$ then $a_{hs}=0 $ for each $h=1\dots p$.
\end{corollary}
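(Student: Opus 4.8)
The plan is to read the result straight off the upper bound $L_{ij}$ of Definition~\ref{def:Ialpha}, with no induction required. Throughout, $a$ denotes a point of $I(\alpha)$, so two facts are available for free: every coordinate is nonnegative, because $a\in\mathbb Z_{\ge 0}^{p^2}$, and every coordinate obeys $a_{ij}\le L_{ij}(\alpha,a)$, by the definition of $I(\alpha)$. Consequently, to prove the first assertion it suffices to show that $L_{rk}(\alpha,a)\le 0$ for each $k$ whenever $n_r=0$: the squeeze $0\le a_{rk}\le L_{rk}\le 0$ then forces $a_{rk}=0$.

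First I would expand the upper bound at the entry $(r,k)$,
\[
L_{rk}(\alpha,a)=\Bigl(n_r-\sum_{h=1}^{k-1}a_{rh}\Bigr)\vee\Bigl(m_k-\sum_{h=1}^{r-1}a_{hk}\Bigr),
\]
recalling that in the present convention $\vee$ returns the minimum of its two arguments (as is dictated by the contingency-table reading $n_i=\sum_k a_{ik}$, $m_j=\sum_h a_{hj}$ of the coefficient vectors and confirmed by the worked $p=2$ case). Hence $L_{rk}$ cannot exceed its first argument. Putting $n_r=0$ turns that argument into $-\sum_{h=1}^{k-1}a_{rh}$, which is $\le 0$ since each $a_{rh}\ge 0$; therefore $L_{rk}\le 0$ and $a_{rk}=0$ for every $k$. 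By Proposition~\ref{Prop:bound}, extended to vanishing $n_r,m_s$ by the remark preceding the corollary, one even has $l_{rk}\ge 0$, so in fact $L_{rk}=0$; this is a consistency check rather than a needed step.

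The second assertion is obtained by transposing the argument. When $m_s=0$, I would bound $L_{hs}(\alpha,a)$ by its second argument $m_s-\sum_{g=1}^{h-1}a_{gs}=-\sum_{g=1}^{h-1}a_{gs}\le 0$, so that $L_{hs}\le 0$ and the same squeeze yields $a_{hs}=0$ for every $h$. I do not anticipate a real obstacle, since the whole proof is a one-line inspection of $L_{ij}$; the only points needing care are the orientation of $\wedge,\vee$—one must use that $L_{ij}$ is the smaller of its two defining terms, so that a single term bounds it from above—and the symmetric bookkeeping that swaps the role played by the row constraint (when $n_r=0$) with that of the column constraint (when $m_s=0$).
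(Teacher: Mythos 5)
Your proof is correct and follows essentially the same route as the paper's: both read the conclusion directly off the upper bounds, using that $\vee$ in this paper denotes the minimum, so that $L_{rk}(\alpha,a)\le n_r-\sum_{h=1}^{k-1}a_{rh}\le 0$ when $n_r=0$, combined with $0\le a_{rk}\le L_{rk}(\alpha,a)$. The only difference is cosmetic: the paper squeezes the whole row at once via $0\le L_{r,p}\le -\sum_{k=1}^{p-1}a_{rk}$ (invoking Proposition~\ref{Prop:bound} for the left inequality) and then handles $a_{rp}$ separately, whereas you apply the bound entrywise, which dispenses with the appeal to Proposition~\ref{Prop:bound} altogether.
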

\begin{proof}
From Proposition~\ref{Prop:bound}, $L_{r,p} \ge 0$ which implies
$$ 0 \le L_{r,p} \le n_r - \sum_{k=1}^{p-1} a_{rk} = - \sum_{k=1}^{p-1} a_{rk}$$
and so, since $a_{rk} \ge 0$ for each $ k\le p$, we conclude that $a_{rk} = 0$ for $r=1, \dots, (p-1)$. From this result it follows that $L_{r,p}=\left( n_r-\sum_{k=1}^{p-1} a_{rk} \right ) \vee \left(m_p -\sum_{h=1}^{r-1} a_{hp}\right)=0$, and so $a_{r,p}=0$.
Analogously we can show the thesis for $a_{hs}$, since $ 0 \le L_{p,s} \le m_s - \sum_{h=1}^{p-1} a_{hs}$.
\end{proof}

\begin{proposition}\label{extreme}
The elements of the vector $a$ with an index equal to $p$ have the following special form:
\begin{equation*}
  a_{ip} = n_i-\sum_{k=1}^{p-1}a_{ik}\ , \qquad
  a_{pj} = m_j-\sum_{h=1}^{p-1}a_{hj}\ , \qquad
  a_{pp} =  n_p-\sum_{k=1}^{p-1}  m_k + \sum_{h,k=1}^{p-1}a_{hk} \ .
\end{equation*}
\end{proposition}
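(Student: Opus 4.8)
The plan is to read the three identities as the statement that every coefficient vector in $I(\alpha)$ has prescribed row sums $n_i$ and column sums $m_j$, together with the compatibility of the two ways of closing the bottom‑right corner. Recall from the discussion preceding Definition~\ref{def:Ialpha} that a vector $a \in I(\alpha)$ records a decomposition $\alpha=\sum_{h,k}a_{hk}\beta_{hk}$ with $\beta_{hk}=e_{2h-1}+e_{2k}$. Matching the coefficient of $e_{2i-1}$ on the two sides gives the row identity $\sum_{k=1}^{p}a_{ik}=n_i$, and matching the coefficient of $e_{2j}$ gives the column identity $\sum_{h=1}^{p}a_{hj}=m_j$. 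The index‑$p$ entries are precisely those determined by closing these marginals: solving the row identity for the last entry of row $i$ gives $a_{ip}=n_i-\sum_{k=1}^{p-1}a_{ik}$, and solving the column identity for the last entry of column $j$ gives $a_{pj}=m_j-\sum_{h=1}^{p-1}a_{hj}$, which are the first two formulas.

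To anchor this at the level of the explicit bounds in Definition~\ref{def:Ialpha}, I would argue by induction on the row index $i$, assuming the row closure $\sum_{k=1}^{p}a_{hk}=n_h$ for all $h<i$. Substituting these closures into the sum inside the minimum defining $l_{ip}$ collapses it exactly to $n_i-\sum_{k=1}^{p-1}a_{ik}$ (the stray term $\sum_{h=p+1}^{p}m_h$ is empty, so for rows the balance condition is not even needed), and Proposition~\ref{Prop:bound}, in the form $n_i-\sum_{k=1}^{p-1}a_{ik}\ge 0$ already exploited in Corollary~\ref{cor:null_exp}, guarantees that this closed value is a legitimate nonnegative exponent. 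The column identity is obtained symmetrically, except that simplifying the sum inside $l_{pj}$ now produces $\sum_h n_h-\sum_k m_k + m_j-\sum_{h<p}a_{hj}$, so it reduces to $m_j-\sum_{h=1}^{p-1}a_{hj}$ only after invoking the hypothesis $\sum_h n_h=\sum_k m_k$ of Theorem~\ref{th:main}; this is where the balance condition first enters.

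For the corner I would close row $p$ and then eliminate the entries of the last row by the column formula just proved:
\begin{equation*}
  a_{pp}=n_p-\sum_{k=1}^{p-1}a_{pk}
        =n_p-\sum_{k=1}^{p-1}\Bigl(m_k-\sum_{h=1}^{p-1}a_{hk}\Bigr)
        =n_p-\sum_{k=1}^{p-1}m_k+\sum_{h,k=1}^{p-1}a_{hk},
\end{equation*}
which is the third formula.

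The only genuinely delicate point — and the one I expect to be the main obstacle — is that $a_{pp}$ can equally be computed by closing column $p$ and removing the last column by the row formula, which yields $a_{pp}=m_p-\sum_{h=1}^{p-1}n_h+\sum_{h,k=1}^{p-1}a_{hk}$. The two expressions agree if and only if $n_p-\sum_{k=1}^{p-1}m_k=m_p-\sum_{h=1}^{p-1}n_h$, i.e.\ exactly when $\sum_{h=1}^{p}n_h=\sum_{k=1}^{p}m_k$. Thus the proposition is well posed precisely under the balance hypothesis of Theorem~\ref{th:main}, and once that careful bookkeeping of the two marginal totals is in place the remaining verification is the single substitution displayed above.
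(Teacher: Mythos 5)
Your proposal is correct and follows essentially the same route as the paper's own proof: the same induction on the free index, the same collapse of the lower bound $l_{ip}$ (and of $l_{pj}$, where you correctly identify that the balance condition $\sum_h n_h=\sum_k m_k$ first enters) to the value $n_i-\sum_{k=1}^{p-1}a_{ik}$ (resp.\ $m_j-\sum_{h=1}^{p-1}a_{hj}$), and the same substitution of the column identities into the row closure of row $p$ to obtain $a_{pp}$. The one step you should make explicit is the forcing mechanism: checking that the collapsed value is ``a legitimate nonnegative exponent'' only gives feasibility, whereas what pins $a_{ip}$ down is that this same quantity is one of the two branches of the upper bound $L_{ip}$, so that Proposition~\ref{Prop:bound} yields $l_{ip}=L_{ip}$ and the admissible interval degenerates to a single point.
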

\begin{proof} Proposition~\ref{Prop:bound} shows that $l_{ij}  \le L_{ij}$. We show, by induction, the thesis for $ a_{ip} $.\\
 Base step:  $ n_1-\sum_{k=1}^{p-1} a_{1k} =l_{1p}  \le L_{1p} =  m_1 \vee  n_1-\sum_{k=1}^{p-1} a_{1k} $ and so $ l_{1p}   = L_{1p}  = n_1 -\sum_{k=1}^{p-1} a_{1k} $ and $a_{1p}= n_1-\sum_{k=1}^{p-1} a_{1k} $. \\
 Induction step: let $  a_{hp} = n_h-\sum_{k=1}^{p-1}a_{hk} $ for $h < i$, that is $n_h = \sum_{k=1}^{p }a_{hk} $. We obtain, using the inductive hypothesis,
$$ l_{ip} = \sum_{h=1}^i n_h -   \sum_{\substack{h=1\dots i; k=1\dots p \\(h,k) \neq (i,p) }} a_{hk}  = \sum_{h=1}^{i-1} \left(n_h -   \sum_{k=1}^p   a_{hk} \right) +n_i -\sum_{ k=1}^{p-1} a_{ik} = n_i -\sum_{ k=1}^{p-1} a_{ik} $$
and so, since $l_{ip} \le L_{ip} =(n_i -\sum_{ k=1}^{p-1} a_{ik}) \vee (m_p-\sum_{ h=1}^{p-1} a_{hp})$, we conclude that $a_{ip}=n_i-\sum_{k=1}^{p-1}a_{ik}$. The proof for $a_{pj}$ is analogous. \\
Finally, from the special values of  $ a_{ip}$ and $ a_{pj} $, we obtain
\begin{eqnarray*}
l_{pp}&=&  \sum_{h=1}^p n_h - \sum_{\substack{h=1\dots p; k=1\dots p \\(h,k) \neq (p,p) }} a_{hk}  =
 \sum_{h=1}^p n_h - \sum_{h,k=1}^{p-1} a_{hk} - \sum_{h=1}^{p-1} a_{hp}  - \sum_{k=1}^{p-1} a_{pk}  \\
& =& \sum_{h=1}^p n_h - \sum_{h,k=1}^{p-1} a_{hk} - \sum_{h=1}^{p-1}  \left ( n_h-\sum_{k=1}^{p-1}a_{hk} \right) -  \sum_{k=1}^{p-1}\left(m_k-\sum_{h=1}^{p-1}a_{hk} \right)\\
&=&  n_p -  \sum_{k=1}^{p-1} m_k +  \sum_{k=1}^{p-1} \sum_{h=1}^{p-1}a_{hj} \ .
\end{eqnarray*}
Furthermore,
\begin{eqnarray*}
L_{pp}&=&\left (m_p - \sum_{h=1}^{p-1} a_{hp} \right) \vee \left (n_p - \sum_{k=1}^{p-1} a_{pk} \right) \\
&=& \left (m_p - \sum_{h=1}^{p-1} a_{hp} \right) \vee \left ( n_p -  \sum_{k=1}^{p-1} m_k +  \sum_{k=1}^{p-1} \sum_{h=1}^{p-1}a_{hj} \right)  \ .
\end{eqnarray*}
Analogously to the previous cases, the thesis follows since $l_{pp} \le L_{pp}$.
\end{proof}

\begin{proposition}\label{Prop:values}
Let $\alpha=(n_1,m_1,\dots, n_p,m_p) \in \mathbb Z_{\ge 0}^{2p}$ and let $a=(a_{1,1},\dots,a_{1,p},\dots,  a_{p,p}) \in \mathbb Z_{\ge 0}^{p^2}$ be a vector belonging to $I(\alpha)$.
\begin{enumerate}
\item Let $a_{ij}=L_{ij}(\alpha,a)$.
\begin{eqnarray*}
\text{If } \quad L_{ij}(\alpha,a) =  m_j - \sum_{h=1}^{i-1}a_{hj} \quad &\text{then}&\quad
a_{qj}= 0 \quad \forall \;  q=i+1,\dots, p \ .\\
\text{If }  \quad L_{ij}(\alpha,a)=  n_i  - \sum_{k=1}^{j-1}a_{ik} \quad &\text{then}&\quad
a_{it}=0 \quad \forall \;  t=j+1,\dots, p \ .
\end{eqnarray*}
\item Let $a_{ij}=l_{ij}(\alpha,a)$.
\begin{eqnarray*}
\text{If } \quad l_{ij}(\alpha,a)&=& \sum_{h=1}^ i n_h - \sum_{h=j+1}^ p m_h - \sum_{\substack{h=1,\dots, i \\ k=1, \dots, j \\ (h,k) \neq (i,j)}} a_{hk} \quad \text{then} \\
a_{it}&=& m_t-\sum_{h=1}^{i-1} a_{ht}\quad   \forall \;   {t=j+1,\dots,p } \ ,\\
a_{qj}&=& n_q-\sum_{k=1}^{j-1} a_{qk} \quad   \forall \;  {q=i+1,\dots, p}  \ ,\\
a_{qt}&=& 0 \qquad\qquad\qquad   \forall \;  {q=i+1,\dots, p}, \;{t=j+1,\dots,p } \ .
\end{eqnarray*}
\end{enumerate}
\end{proposition}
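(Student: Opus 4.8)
The plan is to read every $a\in I(\alpha)$ as a non-negative integer matrix carrying prescribed marginals. From Proposition~\ref{extreme} the border entries $a_{ip},a_{pj},a_{pp}$ are residuals, and substituting these back shows that $\sum_{k=1}^p a_{hk}=n_h$ for every $h$ and $\sum_{h=1}^p a_{hk}=m_k$ for every $k$; the two families of marginals are compatible because $\sum_h n_h=\sum_k m_k$. Combined with $a_{hk}\ge 0$ (Proposition~\ref{Prop:bound}), these row- and column-sum identities are the only facts I need, and each assertion becomes a short saturation argument.

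For Item~1, suppose $a_{ij}=L_{ij}$ with $L_{ij}=m_j-\sum_{h=1}^{i-1}a_{hj}$. Then $\sum_{h=1}^{i}a_{hj}=m_j$ exhausts the entire column sum of column $j$, so the remaining non-negative entries $a_{qj}$, $q>i$, must all vanish. The case $L_{ij}=n_i-\sum_{k=1}^{j-1}a_{ik}$ is the transpose statement: it gives $\sum_{k=1}^{j}a_{ik}=n_i$ and hence $a_{it}=0$ for $t>j$. Which of the two terms realises the value of $L_{ij}$ matters only to identify $a_{ij}$; the conclusion then follows from the marginals and non-negativity alone.

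For Item~2 the first move is to turn the hypothesis $a_{ij}=l_{ij}$ into a single aggregate identity. Adding $a_{ij}$ to both sides of the nontrivial branch of $l_{ij}$ yields
\[
\sum_{h\le i,\,k\le j}a_{hk}=\sum_{h=1}^{i}n_h-\sum_{k=j+1}^{p}m_k .
\]
Subtracting this from the row sums $\sum_{h\le i}\sum_{k=1}^p a_{hk}=\sum_{h=1}^i n_h$ gives the balance $\sum_{h\le i,\,k>j}a_{hk}=\sum_{k=j+1}^{p}m_k$, and subtracting that in turn from the column sums $\sum_{k>j}\sum_{h=1}^p a_{hk}=\sum_{k>j}m_k$ gives $\sum_{h>i,\,k>j}a_{hk}=0$. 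Non-negativity now forces $a_{qt}=0$ for all $q>i$, $t>j$, which is the third conclusion. The second conclusion follows immediately: for a row $q>i$ all entries with $k>j$ vanish, so $\sum_{k\le j}a_{qk}=n_q$, i.e.\ $a_{qj}=n_q-\sum_{k<j}a_{qk}$.

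The one step that is not pure bookkeeping is the first conclusion, $a_{it}=m_t-\sum_{h<i}a_{ht}$ for $t>j$: here I must pass from the \emph{aggregate} balance $\sum_{h\le i,\,k>j}a_{hk}=\sum_{k>j}m_k$ to the \emph{per-column} saturations $\sum_{h\le i}a_{ht}=m_t$. Since each column $t>j$ satisfies $\sum_{h\le i}a_{ht}\le\sum_{h=1}^p a_{ht}=m_t$, equality of the sums over $t>j$ can hold only if every slack is zero, forcing equality column by column and hence the claimed residual form of $a_{it}$. This ``sum of non-negative slacks vanishes'' argument is the sole point requiring care; everything else is substitution into the marginal identities furnished by Proposition~\ref{extreme}.
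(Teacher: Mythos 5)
Your proof is correct, but it follows a genuinely different route from the paper's. The paper proves each assertion by induction on the lagging index, repeatedly unfolding the defining inequalities $l_{hk}(\alpha,a)\le a_{hk}\le L_{hk}(\alpha,a)$ to show that the relevant bound collapses ($l=L$) at each successive entry. You instead extract from Proposition~\ref{extreme} the global marginal identities $\sum_{k=1}^p a_{hk}=n_h$ and $\sum_{h=1}^p a_{hk}=m_k$ (valid under the standing balance condition $\sum_h n_h=\sum_k m_k$, which you correctly flag and which the paper also uses in Proposition~\ref{Prop:bound}), and then treat every claim as a saturation statement about a non-negative matrix with fixed row and column sums: a saturated partial column sum kills the rest of the column (Item~1), and the aggregate identity obtained from $a_{ij}=l_{ij}$ forces the lower-right block to vanish and the remaining slacks to be zero column by column (Item~2). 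The one delicate step — passing from the aggregate balance $\sum_{h\le i,\,k>j}a_{hk}=\sum_{k>j}m_k$ to per-column equalities — you handle correctly via the ``sum of non-negative slacks is zero'' argument. There is no circularity, since Proposition~\ref{extreme} is proved from Proposition~\ref{Prop:bound} alone. What your approach buys is brevity and a transparent combinatorial picture (the transportation-polytope view of $I(\alpha)$); what the paper's inductive argument buys is that it works entirely at the level of the bounds $l_{ij},L_{ij}$ and re-derives the needed local identities without invoking the marginals as a package, which keeps the statement meaningful entry by entry even before Proposition~\ref{extreme} is brought in.
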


\begin{proof}
For simplicity $\alpha$ and $a$ are omitted.
\begin{enumerate}
\item We consider  $a_{ij}=L_{ij}=m_j-\sum_{h=1}^{i-1} a_{hj}$. Then
\begin{eqnarray} \label{hyp_2}
m_j=  \sum_{h=1}^i a_{hj} \ .
\end{eqnarray}
We show, by induction, that  $a_{qj} = 0$ for $q >i$.
\begin{itemize}
\item \emph{Base step: $q=i+1$}. We have $0 \le a_{(i+1),j} \le L_{(i+1),j} \le  m_j-\sum_{h=1}^i  a_{hj} = 0$.
\item \emph{Induction step.} Let $a_{hj}=0$ for $h=(i+1),\dots (q-1)$. Then
$$ 0 \le a_{qj}  \le L_{qj} \le m_j - \sum_{ h=1}^{q-1} a_{hj}  = m_j - \sum_{ h=1}^{i} a_{hj}  =  0 \ . $$
\end{itemize}
Analogously, when $a_{ij}=L_{ij}=n_i - \sum_{h=1}^{j-1}a_{ih}$, then $a_{it}=0$ for all $t=j+1,\dots, p$.
\item  First of all, we show by induction that  $a_{it}=m_t-\sum_{h=1}^{i-1} a_{ht} $ for each $ t> j$. \\
Since $a_{ij}=   \sum_{h=1}^ i n_h - \sum_{k=j+1}^ p m_k - \sum_{\substack{h=1,\dots, i; k=1, \dots, j \\ (h,k) \neq (i,j)}} a_{hk} $ then
\begin{equation*}
  \sum_{h=1}^ i n_h - \sum_{k=j+1}^ p m_k  = \sum_{\substack{h=1,\dots, i;\\  k=1, \dots, j} } a_{hk} \
\end{equation*}
and so, for each $t>j$ it holds
\begin{multline} \label{hyp}
 l_{i,t} = \sum_{h=1}^i n_h - \sum_{k=t+1}^p m_k - \sum_{h=1}^i  \sum_{\substack{k=1\\ (h,k) \neq (i,t)}} ^ta_{hk} = \sum_{h=1}^i n_h - \sum_{k=j+1}^p m_k + \sum_{k=j+1}^t m_k -  \sum_{h=1}^i \sum_{\substack{k=1\\ (h,k) \neq (i,t)}}^t a_{hk} \\
=  \sum_{k=j+1}^t m_k  + \sum_{h=1}^i \sum_{k=1}^j    a_{hk}-  \sum_{h=1}^i \sum_{\substack{k=1\\ (h,k) \neq (i,t)}}^t a_{hk} =  \sum_{k=j+1}^t m_k  - \sum_{h=1}^{i-1} \sum_{k=j+1}^t  a_{hk}-  \sum_{k=j+1}^{t-1} a_{ik} \\
= \sum_{k=j+1}^t (m_k- \sum_{h=1}^{i-1}  a_{hk }) -  \sum_{k=j+1}^{t-1} a_{ik}
\end{multline}
\begin{itemize}
\item \emph{Base step: $t=j+1$.} \\
By Equation~\eqref{hyp} with $t=j+1$ we have
$$  l_{i,j+1} = m_{j+1} -   \sum_{h=1}^{ i-1 } a_{h,j+1}  \le L_{i,j+1} \le  m_{j+1} -   \sum_{h=1}^{ i-1 } a_{h,j+1}  $$
that is $l_{i,j+1} = L_{i,j+1}$, and so the thesis follows since  $l_{i,j+1} \le a_{i,j+1} \le  L_{i,j+1}$.
\item \emph{Induction step.} \\
Let  $a_{ik}=m_k - \sum_{h=1}^{i-1} a_{hk}$ for  $j+1 \le k< t$.
By Equation~\eqref{hyp} and by the inductive hypothesis,
\begin{eqnarray*}
 l_{it} &= & m_t- \sum_{h=1}^{i-1}  a_{ht } +  \sum_{k=j+1}^{t-1} (m_k - \sum_{h=1}^{i-1}  a_{hk }) -  \sum_{k=j+1}^{t-1} a_{ik} \\
 &=& m_t- \sum_{h=1}^{i-1}  a_{ht } +  \sum_{k=j+1}^{t-1} a_{ik}  -  \sum_{k=j+1}^{t-1} a_{ik}  =  m_t- \sum_{h=1}^{i-1}  a_{ht } \ge L_{it} \ge l_{it}
 \end{eqnarray*}
 and so  $l_{it} = L_{it}$, and the thesis follows since  $l_{it} \le a_{it} \le  L_{it}$.
\end{itemize}
Analogously, we can show, by induction, that  $a_{qj}= n_q-\sum_{k=1}^{j-1} a_{qk}$ for $q > i $.

Furthermore, since $a_{qj}=L_{qj} = n_q-\sum_{k=1}^{j-1} a_{qk}$ for $q=i+1,\dots, p $, then from Item 1 it follows  $a_{qt}=0$ for  $t= j+1,\dots, p$ and so, by varying $q  \ge i + 1$ we obtain the thesis.
\end{enumerate}
\end{proof}
\begin{proposition}\label{Prop:bound_b}
Let $r$, $s$ be two integers in $\{ 1,\dots,p\}$, let $\alpha_{rs}^-=\alpha-e_{2r-1}-e_{2s}$, as  defined before,  and let  $I(\alpha)$ be as in Definition~\eqref{def:Ialpha}.
 Let $I^+_{rs}= \left \{ a_{rs}^+ \vert a \in I( \alpha_{rs}^-) \right \}$. It holds
 \begin{eqnarray*}
I^+_{rs} &=&\{ b \in  I(\alpha) \ | \ b_{ij}  \neq \sum_{h=1}^i n_h -\sum_{k=j+1} ^p m_k -\sum_{\substack{h=1,\dots,i; j=1,\dots,j\\ (h,k) \neq (i,j)}} b_{hk};\\
 b_{is} &\neq& m_s  - \sum_{h=1}^{i-1} a_{hs}; \qquad b_{rj} \neq  n_r  - \sum_{k=1}^{j-1} b_{rk}; \qquad b_{rs} \neq 0 \qquad   i< r, j<s   \}  \ .
  \end{eqnarray*}
\end{proposition}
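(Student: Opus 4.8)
The plan is to exhibit the asserted subset of $I(\alpha)$ as the image of $I(\alpha^-_{rs})$ under the single-coordinate shift $a\mapsto b=a^+_{rs}=a+e_{(r-1)p+s}$, and to prove the set equality by a double inclusion. Throughout I would fix $a$ and $b=a^+_{rs}$, so that $b_{ij}=a_{ij}$ for every $(i,j)\neq(r,s)$ while $b_{rs}=a_{rs}+1$, and compare the defining bounds of Definition~\ref{def:Ialpha} for the pair $(\alpha,b)$ against those for $(\alpha^-_{rs},a)$. The whole argument reduces to one bookkeeping computation: tracking how the three modifications $n_r\mapsto n_r-1$, $m_s\mapsto m_s-1$ and $a_{rs}\mapsto a_{rs}+1$ propagate through the partial sums appearing in $l_{ij}$ and $L_{ij}$ in Equation~\eqref{def:bounds}.

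First I would carry out this computation, organising the positions $(i,j)$ into the four regions determined by $i<r$ versus $i\ge r$ and $j<s$ versus $j\ge s$, together with the boundary row $i=r$ and column $j=s$. The facts to extract are: (i) at $(i,j)=(r,s)$ both $l_{rs}$ and $L_{rs}$ for $(\alpha^-_{rs},a)$ are exactly one less than the corresponding bounds for $(\alpha,b)$, so $l_{rs}(\alpha^-_{rs},a)\le a_{rs}\le L_{rs}(\alpha^-_{rs},a)$ is equivalent to $l_{rs}(\alpha,b)\le b_{rs}\le L_{rs}(\alpha,b)$ \emph{together with} $b_{rs}\ge 1$, i.e.\ $b_{rs}\neq 0$; (ii) for $(i,j)\neq(r,s)$ the argument of $l_{ij}$ for $(\alpha^-_{rs},a)$ exceeds that for $(\alpha,b)$ by exactly $1$ when $i<r$ and $j<s$, and coincides otherwise; (iii) the upper bound $L_{ij}$ for $(\alpha^-_{rs},a)$ can only decrease relative to $(\alpha,b)$, the decrease coming from the term $n_r-\sum_{k<j}b_{rk}$ in the boundary case $i=r,\,j<s$ and from $m_s-\sum_{h<i}b_{hs}$ in the boundary case $i<r,\,j=s$, the bounds coinciding elsewhere. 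Since $0\wedge(\cdot)$ and $(\cdot)\vee(\cdot)$ are monotone, (ii) and (iii) give $l_{ij}(\alpha,b)\le l_{ij}(\alpha^-_{rs},a)$ and $L_{ij}(\alpha,b)\ge L_{ij}(\alpha^-_{rs},a)$ at every position.

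From these relations both inclusions follow. For $I^+_{rs}\subseteq\{\dots\}$, take $a\in I(\alpha^-_{rs})$ and $b=a^+_{rs}$: monotonicity immediately yields $l_{ij}(\alpha,b)\le b_{ij}\le L_{ij}(\alpha,b)$ for all $(i,j)$, so $b\in I(\alpha)$, while (i) gives $b_{rs}\neq 0$. The excluded boundary values are then ruled out because a \emph{strict} gap opened by (ii) or (iii) would be violated: if $b_{ij}$ equalled the nontrivial lower value $\sum_{h\le i}n_h-\sum_{k>j}m_k-\sum_{(h,k)\neq(i,j)}b_{hk}$ at some $i<r,\,j<s$, or the value $m_s-\sum_{h<i}b_{hs}$ at some $i<r$, or $n_r-\sum_{k<j}b_{rk}$ at some $j<s$, then $a_{ij}=b_{ij}$ would fall strictly outside the shifted bound $l_{ij}(\alpha^-_{rs},a)$ or $L_{ij}(\alpha^-_{rs},a)$, contradicting $a\in I(\alpha^-_{rs})$. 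For the reverse inclusion I would start from $b\in I(\alpha)$ meeting all the listed conditions, note $b_{rs}\ge1$ so that $a:=b-e_{(r-1)p+s}$ is nonnegative, and verify $a\in I(\alpha^-_{rs})$ position by position; the only places where the $\alpha^-_{rs}$-bounds are strictly tighter than the $\alpha$-bounds are exactly the boundary positions singled out in (ii) and (iii), and the hypothesised inequalities $b_{ij}\neq(\text{boundary value})$ guarantee that $a_{ij}$ still satisfies the tightened bound.

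The step I expect to be the main obstacle is fact~(iii): because $L_{ij}$ is a maximum of two terms, a unit decrease of one argument changes $L_{ij}$ only when that argument is the active one, so turning "the relevant term drops by one" into a clean statement about $L_{ij}$ itself requires showing that in each relevant case the decremented term is the maximiser, or else controlling the configuration when $b_{ij}$ sits at that boundary. Here I would lean on Proposition~\ref{Prop:values}: attaining one of these boundary values pins down all later entries of $b$ (those of larger row or column index), which is precisely the structural fact needed both to make the case analysis consistent and to connect this proposition to its use in reducing $\sum_{b\in I^+_{rs}}b_{rs}\,\Pi(\alpha,b)$ to a sum over all of $I(\alpha)$ in the proof of Theorem~\ref{th:main}.
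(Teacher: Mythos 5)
Your proposal is correct and follows essentially the same route as the paper: a position-by-position comparison of the bounds $l_{ij}$ and $L_{ij}$ for $(\alpha^-_{rs},a)$ against those for $(\alpha,b)$ over the regions determined by $i\lessgtr r$, $j\lessgtr s$, showing the bounds coincide except at $(r,s)$, along row $r$ with $j<s$, along column $s$ with $i<r$, and in the lower-bound argument for $i<r,\,j<s$, which yields exactly the listed exclusions. One small remark: the appeal to Proposition~\ref{Prop:values} at the end is not actually needed here, since the exclusions are stated directly as conditions on $b_{ij}$ rather than on $L_{ij}$ itself (so the ``which term of the $\vee$ is active'' issue dissolves); that proposition is only required later, in the proof of Theorem~\ref{th:main}, to pass from the sum over $I^+_{hk}$ to the sum over $I(\alpha)$.
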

\begin{proof}
Let $b=a_{rs}^+$, that is $b_{ij}=a_{ij}$, if $(i,j) \neq (r,s)$, and $b_{rs}=a_{rs}+1$.
 \begin{enumerate}
\item Let  $i < r $ and $j < s$. Both $l_{ij}(\alpha_{rs}^-,a)$ and $L_{ij}(\alpha_{rs}^-,a)$ do not involve $a_{rs}$, so that $l_{ij}(\alpha_{rs}^-,a)=l_{ij}(\alpha_{rs}^-,b)$ and $L_{ij}(\alpha_{rs}^-,a)=L_{ij}(\alpha_{rs}^-,b)$.\\
The bound $l_{ij}(\alpha_{rs}^-,a)$ only involves $m_s$  and, since  $L_{ij}(\alpha_{rs}^-,a) $ does not involve $n_r$, $m_s$,   $L_{ij}(\alpha_{rs}^-,a) = L_{ij}(\alpha,a) $ and so
\begin{eqnarray*}
l_{ij}(\alpha_{rs}^-,a) =0 \wedge \left(\sum_{h=1}^i n_h - \sum_{k=j+1}^p m_k  +1 -\sum_{\substack{h=1\dots,i;k=1,\dots,j \\ (h,k) \neq (i,j) }}a_{hk} \right) \le a_{ij} \le L_{ij}(\alpha_{rs}^-,a) =L_{ij}(\alpha,a) 
\end{eqnarray*}
or, equivalently,
\begin{eqnarray*}
0 \wedge \left(\sum_{h=1}^i n_h - \sum_{k=j+1}^p m_k  +1 -\sum_{\substack{h=1\dots,i;k=1,\dots,j \\ (h,k) \neq (i,j) }}b_{hk} \right) \le b_{ij} \le L_{ij}(\alpha,b) \ .
\end{eqnarray*} %
\item Let  $i < r $ and $j = s$. Both $l_{is}(\alpha_{rs}^-,a)$ and $L_{is}(\alpha_{rs}^-,a)$ do not involve $a_{rs}$, so that $l_{is}(\alpha_{rs}^-,a)=l_{is}(\alpha_{rs}^-,b)$ and $L_{is}(\alpha_{rs}^-,a)=L_{is}(\alpha_{rs}^-,b)$.\\
Since $l_{is}(\alpha_{rs}^-,a)$ does not involve $n_r$, $m_s$  then $l_{is}(\alpha_{rs}^-,a)=l_{is}(\alpha,a)$;
furthermore, $L_{is}(\alpha_{rs}^-,a)$ does not involve $n_r$ , and so
$$ l_{is}(\alpha,a)= l_{is}(\alpha_{rs}^-,a) \le a_{is} \le L_{is}(\alpha_{rs}^-,a)= \left(n_i - \sum_{k=1}^{s-1} a_{ik} \right) \vee \left( m_s -1- \sum_{h=1}^{i-1} a_{hs}\right) \ ,$$
or, equivalently,
$$ l_{is}(\alpha,b) \le b_{is} \le\left(n_i - \sum_{k=1}^{s-1} b_{ik} \right) \vee \left( m_s -1- \sum_{h=1}^{i-1} b_{hs}\right) \ .$$
\item Let  $i = r $ and $j < s$. Analogously  to Item~2, we have
$$ l_{rj}(\alpha,b)=  \le b_{rj} \le  \left(n_r-1 - \sum_{k=1}^{j-1} a_{rk} \right) \vee \left( m_j - \sum_{h=1}^{r-1} a_{hj}\right)  \ .
$$
\item Let  $i = r $ and $j = s$. Both $l_{rs}(\alpha_{rs}^-,a)$ and $L_{rs}(\alpha_{rs}^-,a)$ do not involve $a_{rs}$, so that $l_{rs}(\alpha_{rs}^-,a)=l_{rs}(\alpha_{rs}^-,b)$ and $L_{rs}(\alpha_{rs}^-,a)=L_{rs}(\alpha_{rs}^-,b)$.\\
Since $l_{rs}(\alpha_{rs}^-,a)$  only involves $n_r$, while $L_{rs}(\alpha_{rs}^-,a)$  involves $n_r$ and $m_s$, we have
\begin{eqnarray*}
&&l_{rs}(\alpha_{rs}^-,a) =0 \wedge \left (\sum_{h=1}^r n_h -1- \sum_{k=s+1}^p m_k -\sum_{\substack{h=1\dots,r;k=1,\dots,s \\ (h,k) \neq (r,s) }}a_{hk} \right ) \le a_{rs}  \\
&&\le L_{rs}(\alpha_{rs}^-,a) = \left (n_r-1- \sum_{k=1}^{s-1}a_{rk}\right ) \vee \left(m_s-1- \sum_{h=1}^{r-1}a_{hs}\right)=L_{ij}(\alpha,a) -1  \ ,
\end{eqnarray*}
or, equivalently, since $b_{rs} = a_{rs}+1$,
\begin{eqnarray*}
&&1 \wedge \left (\sum_{h=1}^r n_h - \sum_{k=s+1}^p m_k -\sum_{\substack{h=1\dots,r;k=1,\dots,s \\ (h,k) \neq (r,s) }}b_{hk} \right ) \le b_{rs}  \\
&&\le \left (n_r- \sum_{k=1}^{s-1}b_{rk}\right ) \vee \left(m_s- \sum_{h=1}^{r-1}b_{hs}\right) = L_{rs}(\alpha,b) \ .
\end{eqnarray*}
\item We consider the case with $i>r$ or $j>s$.
\begin{description}
\item{-} Let  $i > r$. Then $l_{ij}(\alpha_{rs}^-,a)$  involves $n_r$, and $n_i -\sum_{h=1}^{j-1} a_{ih}$ does not involve $n_r$ nor~$a_{rs}$.\\
 If $j<s$ then $l_{ij}(\alpha_{rs}^-,a)$ involves $m_s$ but it does not involve $a_{rs}$ and so, by direct computation,
 $l_{ij}(\alpha_{rs}^-,a)=l_{ij}(\alpha,a)=l_{ij}(\alpha,b)$. Furthermore, $m_j -\sum_{h=1}^{i-1} a_{hj}$ does not involves $m_s$ and $a_{rs}$, and so $L_{ij}(\alpha_{rs}^-,a)=L_{ij}(\alpha,a)=L_{ij}(\alpha,b)$. \\
If $j \ge s$ then $l_{ij}(\alpha_{rs}^-,a)$ involves $a_{rs}$ but it does not involve $m_s$ and so, by direct computation, $l_{ij}(\alpha_{rs}^-,a)=l_{ij}(\alpha,b)$. Furthermore, if $j=s$ then $L_{is}(\alpha_{rs}^-,a)$ involves $m_s$ and  $a_{rs}$, and so $L_{is}(\alpha_{rs}^-,a)=L_{is}(\alpha,b)$; otherwise, if $j<s$  it does not involve $m_s$ nor $a_{rs}$ and so
$L_{ij}(\alpha_{rs}^-,a)=L_{ij}(\alpha,a)=L_{ij}(\alpha,b)$.
\item{-} Let   $j > s$. The proof is analogous to the case $i > r$.
\end{description}
Then we obtain
$$ l_{ij}(\alpha,b) \le b_{ij} \le L_{ij}(\alpha,b) \ . $$
\end{enumerate}
The thesis follows from previous bounds for the elements of the vector $b$, since they coincide with the bounds of the elements of the vector $a$, except for some values at the ends of the intervals.
\end{proof}

\end{appendices}
\end{document}